\newcommand{\vekk}[1]{}
\def\@begintheorem#1#2{\it \theoremhook \trivlist \item[\hskip \labelsep{\bf #1\ #2}]}
\def\@opargbegintheorem#1#2#3{\it \theoremhook \trivlist
      \item[\hskip \labelsep{\bf #1\ #2\ (#3)}]}
\def\theoremhook{}
\newtheorem{theo}{Theorem}[section]%Slett ikke sikkert at....
\newtheorem{lemma}{Lemma}[section]
\newtheorem{defin}{Definition}[section]
\newenvironment{proof}{{\it Proof.}}%
        {\ifvmode\else\unskip\fi ~\penalty10000 \hfill%
        $\Box$\vspace{0ex}}
\newcommand{\id}{\mathop{id}}
\newcommand{\st}{\mid}
\newcommand{\abs}[1]{ {\left | #1 \right |} }                   %Tre tegn.
\newcommand{\dlik}{\mathop{:=}\nolimits} 
\newcommand{\into}{\mbox{$\: \rightarrow \:$}}
\newcommand{\norm}[1]{\left \| #1\right\|}
\newcommand{\supp}{\mathop{\mbox{supp}}}
\newcommand{\imply}{\Rightarrow}
\newcommand{\RealN}{{\mbox{$\Bbb R$}}} % Real numbers
\newcommand{\abs}[1]{ {\left | #1 \right |} }
\newcommand{\dlik}{\mathop{:=}\nolimits}
\newcommand{\id}{\mathop{id}\nolimits}
\newcommand{\imply}{\;\; \Rightarrow \;\;}
\newcommand{\into}{\mbox{$\: \rightarrow \:$}}
\newcommand{\norm}[1]{\left \| #1\right\|}
\newcommand{\RealN}{{\mbox{$\Bbb R$}}} % Real numbers
\newcommand{\st}{\mid}
\newcommand{\supp}{\mathop{supp}\nolimits}
\newcommand{\Spec}{\mathop{Sp}\nolimits}
\theoremstyle{plain}
\newtheorem{proposition}{Proposition}
\theoremstyle{remark}
\begin{document}

\title{ 
Image transformations\\ on locally compact spaces
}

\runtitle{Image transformations}

\begin{aug}
% indicate corresponding author with \corref{}
\author{\fnms{Gunnar} \snm{Taraldsen}\corref{}\ead[label=e1]{Gunnar.Taraldsen@ntnu.no}}

\address{Trondheim, Norway.
\printead{e1}}

\runauthor{Taraldsen}

\affiliation{Norwegian University of Science and Technology}

\end{aug}

\begin{abstract}
An image is here defined to be a set which is either open or closed in $X$ and an image transformation $q$
is structure preserving in the following sense:
It corresponds to an algebra homorphism $q: A(a) \into A(q(a))$ for each
singly generated algebra $A (a) = \{\phi(a) \st \phi \in C(\RealN,\RealN), a \in C(X,\RealN)\}$.
We extend parts of J.F. Aarnes' results on quasi-measures, -states, -homomorphisms,
and image-transformations from the setting compact Hausdorff spaces
to locally compact Hausdorff spaces.
\footnote{AMS Subject Classification (1991):
46J10,
28A25, 
28C15,
46L30,
81P10.}
\footnote{Keywords: Banach algebras of continuous functions,
Integration with respect to measures and other set functions,
Set functions and measures on topological spaces,
States,
Logical foundations of quantum mechanics.}
\end{abstract}

\maketitle

% abstract
\vspace{0ex}

%\footnote{Revised Desember 19, 1995.}

\tableofcontents

\section{Introduction and Definitions.}

In the $C^*$-algebraic formulation of quantum mechanics the bounded real
observables are identified with selfadjoint elements in
a $C^*$-algebra $A$.
A physical state can be considered to be
\cite[p.602]{AARNES:Q1}, 
\cite{SEGAL} an assignment of a probability
measure $\mu_a$ on the spectrum of each selfadjoint $a$.
The measure $\mu_a$ is interpreted as the probability
distribution which models the outcome in an experiment where 
the observable $a$ is measured.
This interpretation forces the consistency condition
$\mu_{\phi (a)} = \mu_a \circ \phi^{-1}$,
since a measurement of $a$ is also a measurement of any
observable $\phi (a)$ being a function of $a$.
The function $\phi$ is assumed to be continuous since 
the observable $\phi (a)$ is supposed to be an element in
the $C^*$-algebra.
The end result is that a physical state may be
identified with the functional $\mu$
given by $\mu (a) \dlik \mu_a (\id)$,
which fullfils the fundamental equation
\begin{equation}
\mu (\phi (a)) = \mu_a (\phi) \dlik \int \phi (t) \; \mu_a (dt) .
\end{equation}
Let $A (a) = \{\phi (a) \st \phi \in C (\RealN, \RealN) \}$ 
be the unital norm-closed real algebra of selfadjoint
elements generated by $a$.
A functional defined on the selfadjoint elements of 
a $C^*$-algebra which is linear on each
$A (a)$ is said to be quasi-linear.
It follows from the fundamental equation that $\mu$
is a quasi-linear functional.

In 1991 \cite{AARNES:QUASI} Aarnes presented the
first example of a proper quasi-linear functional.
The idea is to extend the Riesz representation theorem
from integrals to quasi-integrals to obtain a correspondence between 
quasi-integrals and quasi-measures.
Let ${\cal A} = {\cal A} (X)$ be the class of sets in a Hausdorff space $X$
which are either open or closed. 
A (compact-regular, additive, normalized) quasi-measure $\mu$
is a real valued function defined on
$\cal A$ with properties
(i) $\mu (X) = 1$,
(ii) $\mu$ is additive (on disjoints); 
$\mu (A \uplus B) = \mu (A) + \mu (B)$, and
(iii) $\mu$ is compact-regular: 
The measure of an open set $U$ equals the supremum
of the measures of compact sets $K$ contained in $U$.
A quasi-measure  is said to be simple if it only takes
the values $0$ and $1$.
Integration with respect to a quasi-measure $\mu$ is
defined as in \cite[p.46]{AARNES:QUASI}:
If $a: X \into \RealN$ is continuous, then
$\mu \circ a^{-1}$ is the restriction of a measure
$\mu_a$ on $\RealN$.
This gives a consistent family of measures and
$\rho (a) \dlik \mu (a) \dlik \int t \; \mu_a (dt)$ is a quasi-integral:
(i) $\rho: C_b (X) \into \RealN$;
(ii) $\rho : A (a) \into \RealN$ is linear;
(iii) $a \ge 0$ gives $\rho (a) \ge 0$;
(iv) $\rho (1) = 1$;
(v) $\rho (a) = \sup_{k \le a} \rho (k)$.
$\rho$ is simple if it is
multiplicative on each $A (a)$.

A random variable $X$ is a measurable
function from a probability space $\Omega$ to a measurable space $E$.
The set function $X^{-1}$ pulls measurable sets in $E$ back to
measurable sets in $\Omega$ in such a way that 
the measure $P$ on $\Omega$ is pushed to the measure
$P_X \dlik P \circ X^{-1}$ on $E$.
The fundamental change of variable formula
\begin{equation}
E (\phi (X)) \dlik \int \phi (X (\omega))\; P (d\omega)
= \int \phi (x)\; P_X (dx)
\end{equation}
shows that the expectation value of any (measurable) function
of $X$ may be computed from the distribution $P_X$ of $X$.
One may replace the set function $X^{-1}$ with a set function
$\psi$ with properties: 
(i) $\psi (E) = \Omega$,
(ii) $\psi (A^c) = \psi (A)^c$,
(iii) $\psi (A_1 \cup A_2 \cup \cdots)$ 
$= \psi (A_1) \cup \psi (A_2) \cup \cdots$,
and the measure $P$ is again pushed to a measure 
$\psi^* P \dlik P \circ \psi$ on $E$ with a integration result as
above.
 
These results can be generalized to the setting of quasi-measures.
The family of measurable sets is replaced by the family $\cal A$ of
images.
An image is a set which is open or closed.
The measurable functions are replaced by 
continuous functions,
and the result $\mu (\phi (a)) = \mu_a (\phi)$ is the generalization
of the change of variable formula.
The set functions $\psi$ are replaced by
image-transformations $q$.
An image-transformation $q$ from $X$ to $Y$ takes an
image $A$ in $X$ to an image $q (A)$ in $Y$, and has properties
(i) $q (X) = Y$,
(ii) $q (U)$ is open when $U$ is open,
(iii) $q$ is additive; $q (A \uplus B) = q (A) \uplus q (B)$, and
(iv) $q$ is compact-regular: Given an open set $U$ and a compact set
$K \subset q (U)$, there is a compact set $L \subset U$ such that
$K \subset q (L)$.
A quasi-measure $\mu$ on $Y$ is pulled to a quasi-measure
$q^* \mu \dlik \mu \circ q$ on $X$.
The integral $q (a)$ of a continuous bounded function $a$
on $X$ is the continuous bounded function $q (a)$ on $Y$ given by
$q (a) (y) \dlik (q^* \delta_y)(a)$.
The integral with respect to an image-transformation
is a (compact-regular) quasi-homomorphism from $C_b (X)$ to $C_b (Y)$:
(i) $q: A (a) \into A (q(a))$ is an algebra homomorphism
for each $a$, and
(ii) $q (a) (y) = \sup_k q (k) (y)$, where $k \le a$ has compact
support.
The integral above gives 1-1 correspondence between 
image-transformations and quasi-homomorphisms
for locally compact normal spaces.
The generalization of the change of variable formula is
\begin{equation}
(q^* \mu) (a) = \mu (q (a)) .
\end{equation}
The main result in this work is the characterization of all
image-transformations in terms of the 
Aarnes factorization theorem \cite{AARNES:IMAGE}:  
To any image-transformation $q$ from $X$ to $Y$ there exists
a continuous function $w: Y \into X^*$ such that
$q = w^{-1} \circ [*]$.
The $[*]$ is the canonical image-transformation
from $X$ to $X^*$ given by 
$A \mapsto [*] (A) \dlik A^* \dlik \{\sigma \in X^* \st \sigma (A) = 1 \}$.
The space $X^*$ is the set of simple quasi-measures $\sigma$
($\sigma (A)$ equals $0$ or $1$) equipped
with the weak topology: 
$\sigma \mapsto \sigma (a)$ is continuous for all bounded continuous $a$.
In terms of quasi-homomorphisms the factorization is 
as above with
$(w^{-1} a) (y) = a (w (y))$,
and $([*] b) (\sigma) = \sigma (b)$ is the quasi-linear Gelfand transformation.
This means in particular that $[*]$ is a quasi-homomorphism
from $C_b (X)$ to $C_b (X^*)$,
and $w^{-1}$ is a (quasi-)homomorphism from
$C_b (X^*)$ to $C_b (Y)$.
The function $w$ is unique and given by $w = q^* \circ \iota_Y$,
where $\iota_Y : Y \into Y^*$ is the inclusion which maps
$y$ to $\delta_y$,
and $q^*$ maps $Y^*$ into $X^*$.
The factorization result is summarized by the following
commutative diagrams:
\[
\xymatrix{
  {C_b (X^*)} \ar@{-->}[dr]^{w^{-1}} \ar[r]^{(q^*)^{-1}} 
& {C_b (Y^*)} \ar[d]^{\iota_Y^{-1}} \\
  {C_b (X)} \ar[u]^{[*]} \ar[r]_{q}        
& {C_b (Y)}
}\;\;\;\;\;\;\;\;\;\;\;\;\;\; \xymatrix{
  {{\cal A} (X^*)} \ar@{-->}[dr]^{w^{-1}} \ar[r]^{(q^*)^{-1}} 
& {{\cal A} (Y^*)} \ar[d]^{\iota_Y^{-1}} \\
  {{\cal A} (X)} \ar[u]^{[*]} \ar[r]_{q}        
& {{\cal A} (Y)}
}\;\;\;\;\;\;\;\;\;\;\;\;\;\; \xymatrix{
  {X^*} & {Y^*} \ar[l]_{q^*} \\
  {\;}   & Y \ar[u]_{\iota_Y} \ar[ul]_{w}}
\]
As an indication of another possible application of this theory
we quote Aarnes \cite[p.1]{AARNES:IMAGE}:
{\sl Once defined, image-transformations take on a life of their
own. In some sense they seem to be better vehicles for the litteral
transfering of an ``image'' or a message than ordinary functions,
since they allow for the possibility that images of ``small'' sets
will vanish, i.e. equal the empty set.}  

In the following we include 
normalization, compact-regularity and additivity
in the definitions of quasi-measures, quasi-integrals,
image-transformations, and quasi-homomorphisms in order to simplify
the language.
We follow the notational conventions:
$K,L,M$ are compact sets;
$F,G,H$ are closed sets;
$U,V,W$ are open sets;
$A,B,C$ are images;
${\cal A} = {\cal A} (X)$ is the set of images in a Hausdorff space $X$;
$k,l,m$ are real valued continuous functions with compact support;
$a,b,c$ are real valued bounded continuous functions;
and $C_b = C_b (X)$ is the set of real valued bounded continuous functions on $X$.

The first version of this work was a result of a seminar
based on \cite{AARNES:IMAGE} in the spring of 1995.
We acknowledge comments from 
Anden{\ae}s, Knudsen, Rustad, and Aarnes who participated in the seminar.
The results here corresponds to generalizations of the first part of \cite{AARNES:IMAGE}
and are approximately unchanged from the seminar,
but the organization of the proofs is different.
Rustad \cite{Rustad01unboundedQI} refers to an earlier version of this work.
Aarnes and Grubb \cite{AarnesGrubb04image} treat image transformations
in completely regular spaces and their results complements the results in the following.

\section{Integration and the Riesz Representation Theorem.}

The aim in this section is to give the ingredients in the
proof of the Riesz representation theorem.

\begin{theo}
Let $X$ be a locally compact normal space.
A one-one correspondence between quasi-measures $\mu$
and quasi-integrals $\rho$ on $C_b$ is given by
\[
\rho (a) = \int a (x) \mu (dx),\;\;
\mu (U) = \sup_{k \le U} \rho (k) .
\]
The simple quasi-measures corresponds to the simple quasi-integrals. 
\end{theo}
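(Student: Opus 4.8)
The plan is to construct the two passages $\mu\mapsto\rho$ and $\rho\mapsto\mu$ separately, check that each lands in the asserted class, and then verify that they are mutually inverse; the statement about simple objects will fall out of the construction. For the passage from $\mu$ to $\rho$, the key preliminary is that for every continuous $a\colon X\into\RealN$ the set function $\mu\circ a^{-1}$, a priori defined only on the images of $\RealN$, is the restriction of a unique regular Borel probability measure $\mu_a$ on $\RealN$. Here the order structure of the line does the work: the images generated by $a$ reduce to preimages of intervals, so that (after the right-continuous regularisation supplied by the regularity of $\mu$) the function $t\mapsto\mu(a^{-1}((-\infty,t]))$ is a genuine distribution function --- the only algebraic input being additivity of $\mu$ on disjoint images --- whose Lebesgue--Stieltjes measure is $\mu_a$; one checks simultaneously that the family is consistent, $\mu_{\phi(a)}=\mu_a\circ\phi^{-1}$ for $\phi\in C(\RealN,\RealN)$. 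Now set $\rho(a)\dlik\int t\,\mu_a(dt)$, finite because $a$ is bounded and $\mu_a(\RealN)=1$. The quasi-integral axioms are then translations of those of $\mu$: linearity on $A(a)$ because $\rho(\phi(a))=\int\phi\,d\mu_a$ is linear in $\phi$ by consistency, positivity because $a\ge 0$ forces $\mu_a$ onto $[0,\infty)$, $\rho(1)=1$ from $\mu(X)=1$, and $\rho(a)=\sup_{k\le a}\rho(k)$ is exactly compact-regularity of $\mu$ seen through the integral.

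For the reverse passage, fix $a\in C_b$ and note that $\phi\mapsto\rho(\phi(a))$ descends to a positive, unital linear functional on $C(\overline{a(X)})$, a compact set since $a$ is bounded; the classical Riesz theorem then yields a unique regular Borel probability measure $\mu_a$ on $\RealN$, carried by $\overline{a(X)}$, with $\rho(\phi(a))=\int\phi\,d\mu_a$. Define $\mu(U)=\sup_{k\le U}\rho(k)$ for open $U$ and $\mu(F)=\inf_{U\supseteq F}\mu(U)$ for closed $F$; local compactness provides continuous $k$ of compact support with prescribed bounds, and normality provides Urysohn separating functions. One checks that the two clauses agree on clopen sets, that $\mu(X)=1$, and that $\mu$ is compact-regular (essentially by construction), and then --- the crux --- that $\mu$ is additive on disjoint images. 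The device is reduction to the line: given disjoint images $A$ and $B$, use normality to manufacture a single $a\in C_b$ for which $A$, $B$ and $A\uplus B$ are realised, off a $\mu_a$-null set, as level or sublevel sets, so that $\mu(A)$, $\mu(B)$ and $\mu(A\uplus B)$ are all read off from $\mu_a$, where additivity is automatic; the compactly supported truncations needed to match $\mu$ with the relevant $\mu_a$-values come from local compactness. I expect this additivity step --- in particular arranging the reduction uniformly over the four open/closed combinations of $A$ and $B$ in the absence of a compactness assumption --- to be the main obstacle.

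It remains to see that the two passages are mutually inverse. Starting from a quasi-measure $\mu$, forming $\rho$ and then $\mu'(U)=\sup_{k\le U}\int k\,d\mu$, one has $\mu'=\mu$: the inequality $\le$ is immediate from $\int k\,d\mu\le\mu(U)$ when $k\le U$, while $\ge$ is precisely compact-regularity of $\mu$, using that $\mu$ restricted to the sublevel sets of a $k$ is the measure $\mu_k$. Conversely, from $\rho$ one forms $\mu$ and then $\rho'(a)=\int t\,\mu_a(dt)$ with $\mu_a$ now the line measure attached to the reconstructed $\mu$; reducing once more to the line identifies this $\mu_a$ with the Riesz measure of $\phi\mapsto\rho(\phi(a))$, so $\rho'=\rho$ on every $A(a)$, i.e.\ $\rho'=\rho$. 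Finally, $\mu$ is simple iff every $\mu_a$ is a point mass $\delta_{t(a)}$ --- a $\{0,1\}$-valued regular Borel probability measure on $\RealN$ is a Dirac measure --- iff $\rho(\phi(a))=\phi(t(a))$ is multiplicative on each $A(a)$, iff $\rho$ is simple; the step from ``$\mu_a$ Dirac for all $a$'' back to ``$\mu$ is $\{0,1\}$-valued on all images'' is got by writing a given image $A$ as $a^{-1}(S)$ for suitable $a$ and reading $\mu(A)=\mu_a(S)\in\{0,1\}$. Throughout, the compact Hausdorff versions of these assertions are Aarnes' theorem; the input special to the locally compact normal setting is the systematic use of compactly supported Urysohn functions and of the normalisation $\mu(X)=1$ to keep the line measures finite.
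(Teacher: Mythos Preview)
Your overall architecture---build $\rho$ from $\mu$ via the line measures $\mu_a$, build $\mu$ from $\rho$ via $\mu(U)=\sup_{k\le U}\rho(k)$, check the two are inverse, then handle the simple case---is exactly the paper's. Two points in your execution, however, diverge from the paper and as written leave gaps.

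First, you set $\mu(F)=\inf_{U\supseteq F}\mu(U)$, whereas the paper sets $\mu(F)\dlik 1-\mu(F^c)$. The two agree \emph{a posteriori}, but at the construction stage the paper's choice builds in the additivity $\mu(F)+\mu(F^c)=1$ for free, and then yields $\mu(F)=\inf_{F\le a}\rho(a)$ directly from the already-proved $\mu(U)=\sup_{a\le U}\rho(a)$. With your definition you must prove $\mu(F)+\mu(F^c)=1$ separately, and this is not immediate before additivity is in hand. Second, your device for general additivity---``manufacture a single $a\in C_b$ for which $A$, $B$ and $A\uplus B$ are realised, off a $\mu_a$-null set, as level or sublevel sets''---does not work as stated: in a merely normal space an arbitrary image need not be a level or sublevel set of any continuous function, and invoking ``$\mu_a$-null'' is circular since the identification of $\mu$ with the family $\{\mu_a\}$ is exactly what remains to be proved. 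The paper instead proves additivity on disjoint \emph{open} sets directly, via the observation that if $k_1,k_2$ have disjoint supports then $k_1,k_2\in A(k_1-k_2)$, so quasi-linearity gives $\rho(k_1+k_2)=\rho(k_1)+\rho(k_2)$; additivity on closed sets then follows from the complementation definition plus normality, and the mixed cases from monotonicity. The same issue recurs in your simple-case argument: ``write a given image $A$ as $a^{-1}(S)$'' fails in general, and the paper instead uses compact regularity to reduce to compact $K$, then picks $K\preceq k\preceq U$ by Urysohn and reads off $\mu(K)\in\{0,1\}$ from $\mu_k=\delta_{\rho(k)}$.
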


We start with the development of an integration theory
based on quasi-measures in a Hausdorff space.
Some properties of quasi-measures are
summarized by:
A quasi-measure is monotone:
$A \subset B \;\; \Rightarrow \;\; \mu (A) \le \mu (B) .$
It is continuous in the following sense:
$U_\lambda \uparrow U$ $\Rightarrow$
$\mu (U_\lambda) \uparrow \mu (U)$, and
$F_\lambda \downarrow F$
$\Rightarrow$
$\mu (F_\lambda) \downarrow \mu (F)$.
If $X$ is locally compact,
then
$\mu (U) = \sup \{\mu (V) \st \overline{V} = K \subset U\}$.
The proof of these statements are similar to the proof of
the corresponding statements for image-transformations.
The main difference between a measure and a quasi-measure is
that the latter is not defined on an algebra of sets:
The union and intersection of two images need not be an image.
In certain cases it turns out that quasi-measures may be identified 
with measures, and in particular

\begin{proposition}
A quasi-measure $\mu$ on $\RealN$ is the restriction of a unique
Borel measure $\nu$.
\end{proposition}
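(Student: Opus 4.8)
The plan is to show that a quasi-measure $\mu$ on $\RealN$ is forced to be additive on all pairs of disjoint images whose union is again an image, and more generally that $\mu$ behaves like a genuine finitely additive set function on the algebra generated by intervals, so that a classical extension theorem applies. The key structural fact to exploit is that $\RealN$ is one-dimensional: the only connected open sets are open intervals, and complements of closed intervals are disjoint unions of at most two open intervals. This severely limits how ``non-additive'' $\mu$ can be, and is exactly why the pathological examples of Aarnes live only on spaces of covering dimension at least two.

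First I would record the elementary regularity and continuity properties of $\mu$ quoted just before the statement: monotonicity, $\mu(U_\lambda)\uparrow\mu(U)$ for $U_\lambda\uparrow U$, $\mu(F_\lambda)\downarrow\mu(F)$ for $F_\lambda\downarrow F$, and compact-inner-regularity of open sets. Next I would define $\nu$ on bounded intervals by the obvious recipe, e.g. $\nu\big((\alpha,\beta)\big)\dlik\mu\big((\alpha,\beta)\big)$ and $\nu(\{x\})\dlik\mu(\{x\})$, and check finite additivity on the semiring of intervals. The crucial computation is: for $\alpha<\gamma<\beta$, one has the disjoint decomposition $(\alpha,\beta)=(\alpha,\gamma]\uplus(\gamma,\beta)$, and also $(\alpha,\beta)=(\alpha,\gamma)\uplus\{\gamma\}\uplus(\gamma,\beta)$; since in each case the pieces are images and the union is an image, additivity of $\mu$ gives $\mu\big((\alpha,\beta)\big)=\mu\big((\alpha,\gamma]\big)+\mu\big((\gamma,\beta)\big)$, etc. Iterating, $\mu$ is additive over any finite partition of an interval into subintervals, so $\nu$ extends to a finitely additive, monotone, nonnegative set function on the ring of finite unions of intervals, with $\nu(\RealN)$ handled as a supremum over bounded intervals via inner regularity (and equal to $\mu$ there, using $\mu(X)=1$).

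Then I would upgrade finite additivity to countable additivity on that ring using the continuity properties: if $(\alpha,\beta)=\biguplus_n I_n$ is a countable disjoint union of intervals, the finite partial unions increase to $(\alpha,\beta)$ from inside, so $\mu$'s upward continuity on increasing open sets (together with inner regularity to handle half-open endpoints) forces $\nu\big((\alpha,\beta)\big)=\sum_n\nu(I_n)$. By the Carath\'eodory–Hahn extension theorem $\nu$ extends uniquely to a Borel measure on $\RealN$, which I still call $\nu$. Finally I must verify $\mu=\nu$ on all of ${\cal A}(\RealN)$, not merely on intervals: an arbitrary open $U\subset\RealN$ is a countable disjoint union of open intervals, so $\mu(U)=\sum\mu(\text{components})=\sum\nu(\text{components})=\nu(U)$ by the continuity of both $\mu$ and $\nu$; for a closed set $F$, either $F$ is compact and one writes $F$ as a decreasing intersection of finite unions of closed intervals and uses downward continuity, or $F$ is unbounded and one reduces to the bounded case by $\mu(F\cap[-n,n])\to\mu(F)$. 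Uniqueness of $\nu$ is immediate since a Borel measure on $\RealN$ is determined by its values on intervals, which are prescribed by $\mu$.

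The main obstacle is the passage from \emph{finite} additivity on the semiring of intervals to \emph{countable} additivity: a priori $\mu$ is only known to be additive on \emph{pairs} of disjoint images whose union is an image, and one must carefully chain this with the stated one-sided continuity properties (and with compact inner regularity, to pass between open, half-open, and closed intervals at endpoints) to get $\sigma$-additivity. Everything else — the extension theorem, and the final identification $\mu=\nu$ on closed and open sets — is then routine, precisely because in $\RealN$ every open set is a countable disjoint union of intervals and every compact set is an intersection of finite unions of closed intervals.
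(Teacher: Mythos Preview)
Your approach is correct in outline, but it takes a substantially longer road than the paper does. The paper exploits the standard correspondence between Borel probability measures on $\RealN$ and right-continuous, nondecreasing distribution functions: it simply sets $F(t)\dlik\mu(-\infty,t]$, observes that $F$ is monotone (from monotonicity of $\mu$) and right-continuous (from the continuity $F_\lambda\downarrow F\Rightarrow\mu(F_\lambda)\downarrow\mu(F)$), and takes $\nu$ to be the unique Borel measure with this $F$. One then checks $\nu(a,b)=\mu(a,b)$ by a short chain of limits, and the extension to all open sets follows since every open $U\subset\RealN$ is a countable disjoint union of open intervals, on which both $\mu$ and $\nu$ are $\sigma$-additive. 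This bypasses your entire semiring-and-Carath\'eodory program: the distribution-function device already packages the passage from finite to countable additivity, so the ``main obstacle'' you flag simply never arises.

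One small wrinkle in your sketch: for an unbounded closed $F$ you propose $\mu(F\cap[-n,n])\to\mu(F)$, but this is an \emph{increasing} net of closed sets, and the quoted continuity of $\mu$ is only along \emph{decreasing} closed nets (or increasing open ones). The fix is immediate and is what the paper implicitly uses: once $\mu=\nu$ is known on all open sets, complementation gives $\mu(F)=1-\mu(F^c)=1-\nu(F^c)=\nu(F)$ for every closed $F$, with no separate treatment of the unbounded case needed.
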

\begin{proof}
Put $F (t) \dlik \mu (-\infty, t]$.
$F$ is right continuous 
$F (t) = \lim_{t_i \downarrow t} \mu (-\infty, t_i]$
since $\mu$ is continuous.
Monotonity of $\mu$ ensures that $F$ is the distribution
function of a unique Borel measure $\nu$.
Any open set is the disjoint union of a countable family of open intervalls,
so the restriction claim follows from
\begin{eqnarray}
\nu (a,b) & = & \lim_{b_i \uparrow b} \nu (a, b_i] = 
 \lim_{b_i \uparrow b} F (b_i) - F (a) =
 \lim_{b_i \uparrow b} \mu(-\infty, b_i] - \mu (-\infty,a] \nonumber \\
 & = & \lim_{b_i \uparrow b} \mu(-\infty, b_i) - \mu (-\infty,a] =
 \mu(-\infty, b) - \mu (-\infty,a] = \mu (a,b). \nonumber
\end{eqnarray}
\end{proof}

This Proposition is a special case of a more general recent result
\cite[p.4]{SHAKM}:
Every Baire quasi-measure on a Tychonoff space with Lebesgue
covering dimension $\le 1$ is the restriction of a 
finitely additive Baire measure.

Let $\mu$ be a quasi-measure on $X$.
If $f: X \into Y$ is continuous,
then $f^{-1}: {\cal A} (Y) \into {\cal A} (X)$ is an 
image-transformation.
It follows in particular that $\mu \circ f^{-1}$ is
a quasi-measure on $Y$,
as will be proven in the next section.
The particular case $Y = \RealN$ together with the previous 
Proposition gives us integration:

\begin{defin}
Let $\mu_a$ be the extension of $\mu \circ a^{-1}$
to a Borel measure on $\RealN$.
The integral $\mu (a)$ of $a$ with respect to $\mu$
is 
\[
\mu (a) 
\dlik \int a (x) \; \mu (dx) 
\dlik \int t \; \mu_a (dt) 
\dlik \mu_a (\id) .
\]
\end{defin}

We remark that this definition is consistent with the conventional
for ordinary measures due to the change of variable formula.
Borel measures on the real line are uniquely given by their values
on open sets, so the family $\{\mu_a \}$ is a consistent family of measures:
\[
\mu_{\phi (a)} (U) = \mu ((\phi (a))^{-1}(U)) = \mu_a \circ \phi^{-1} (U).
\]
This gives that a quasi-measure on a Hausdorff space $X$ gives a
quasi-linear functional:
\[
\mu (\phi (a)) = \int t \; \mu_{\phi (a)} (dt)
= \int t \; \mu_a \circ \phi^{-1} (d t)
= \int \phi (s) \; \mu_a (ds) = \mu_a (\phi) .
\]

The following Staircase Lemma is fundamental.

\begin{lemma}
Let $a \le b$.
For each $\delta > 0$
we have the decomposition
$a = a_1 + \cdots + a_n$,
$b = b_1 + \cdots + b_n$,
$a_i \in A (a) \cap  A (a_i + b_i)$,
$b_i \in A (b) \cap  A (a_i + b_i)$,
and 
$a_i \le b_i + \delta / n$.
\end{lemma}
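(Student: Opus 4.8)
The plan is to reduce the statement to the elementary ``layer-cake'' decomposition of a single function into $n$ equally thick slabs, after first forcing the values of $a$ and $b$ apart by one slab width so that they can never lie in a common slab.

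First I would shift both functions up: fix $c$ with $|a|,|b|\le c$ (the case $c=0$ being trivial), set $\hat a=a+c$ and $\hat b=b+c$, so that $0\le\hat a\le\hat b\le 2c$ while $A(\hat a)=A(a)$ and $A(\hat b)=A(b)$. Given $\delta>0$, put $h=\min(\delta,c)$ and let $n$ be the least integer with $nh\ge 2c+h$. For $k=1,\dots,n$ let $\ell_k(t)=\max\bigl(0,\min(t-(k-1)h,\,h)\bigr)$ be the continuous clamp onto the slab $[(k-1)h,kh]$, so that $\sum_{k=1}^{n}\ell_k(t)=t$ for all $t\in[0,nh]\supseteq[0,2c+h]$. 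I then take
\[
a_i=\ell_i(\hat a)-\tfrac{c}{n},\qquad b_i=\ell_i(\hat b+h)-\tfrac{c+h}{n}\qquad(i=1,\dots,n).
\]
Summing the clamps gives $\sum_i a_i=\hat a-c=a$ and $\sum_i b_i=\hat b+h-(c+h)=b$ at once, and each $a_i$ lies in $A(\hat a)=A(a)$ and each $b_i$ in $A(\hat b)=A(b)$, since clamps and constants belong to these algebras.

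The crux is the membership $a_i,b_i\in A(a_i+b_i)$, and the inflation $b\mapsto b+h$ is there precisely to secure it. I claim $\ell_i(\hat a)=F_i\bigl(\ell_i(\hat a)+\ell_i(\hat b+h)\bigr)$ pointwise, where $F_i(s)=\max\bigl(0,\min(s-h,\,h)\bigr)$. Indeed, at a point where $\hat a$ lies strictly inside the $i$th slab one has $\hat b+h\ge\hat a+h>ih$, so $\ell_i(\hat b+h)$ is saturated at the constant $h$, the sum there equals $\ell_i(\hat a)+h\in(h,2h)$, and $F_i$ returns $\ell_i(\hat a)$; where $\hat a$ lies below the slab, $\ell_i(\hat a)=0$ and the sum is $\ell_i(\hat b+h)\le h$, on which $F_i$ vanishes; where $\hat a$ lies above the slab, both clamps are saturated, the sum is $2h$, and $F_i(2h)=h$. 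These regimes are mutually consistent, so $F_i$ is a single continuous function realising $\ell_i(\hat a)$ as a function of the sum. Subtracting the constants $c/n$ and $(c+h)/n$ only shifts $a_i+b_i$ by a constant, so $a_i$ is still a continuous function of $a_i+b_i$, i.e. $a_i\in A(a_i+b_i)$; and then $b_i=(a_i+b_i)-a_i\in A(a_i+b_i)$ as well.

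Finally, for the estimate: since $\hat a\le\hat b+h$ and each $\ell_i$ is non-decreasing, $\ell_i(\hat a)\le\ell_i(\hat b+h)$, so $a_i-b_i\le(c+h)/n-c/n=h/n\le\delta/n$. This is the only place the slack $\delta/n$ is spent — it pays for the single extra slab that $\hat b+h$ carries over $\hat b$. The step I expect to be the genuine obstacle, and the one that dictates the whole design, is exactly the compatibility clause $a_i,b_i\in A(a_i+b_i)$: with naive equal slabs of $a$ and of $b$, the sum $\ell_i(a)+\ell_i(b)$ fails to determine $\ell_i(a)$ when $a$ and $b$ sit in a common slab, and widening $b$ to $b+h$ is exactly what rules that coincidence out.
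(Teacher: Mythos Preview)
Your proof is correct and follows essentially the same strategy as the paper's: shift to make everything nonnegative, inflate $b$ by one slab width so that $\ell_i(\hat a)>0$ forces $\ell_i(\hat b+h)$ to be saturated, and then recover each piece from the sum. The only differences are cosmetic --- you use equal-width slabs and write down the recovering function $F$ explicitly, whereas the paper allows an arbitrary partition of mesh $<\delta$ and argues via the observation $\tilde a_i\cdot\bigl((\beta_i-\beta_{i-1})-\tilde b_i\bigr)=0$, which is exactly your saturation claim in algebraic form.
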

\begin{proof} \cite[p.54]{AARNES:QUASI}
Choose a constant $M$ such that
$\tilde{a} \dlik a + M$, 
$\tilde{b} \dlik b + M + \delta$ obeys
$0 \le \tilde{a} \le \tilde{b} - \delta$.
Choose $0 = \beta_0 < \cdots < \beta_n = \beta \dlik \norm{\tilde{b}}$, 
$\beta_{i + 1} - \beta_i < \delta$, and define
\[
\phi (x) \dlik \left\{
\begin{array}{ll}
0 & x \le 0\\
x & 0 \le x \le \beta\\
\beta & x \ge \beta
\end{array}
 \right. , \;\;\;\;\;\;
\phi_i (x) \dlik \left\{
\begin{array}{ll}
0 & x \le \beta_{i - 1}\\
x -  \beta_{i - 1}  &  \beta_{i - 1}  \le x \le \beta_i \\
\beta_i - \beta_{i - 1} & x \ge \beta_i
\end{array}
 \right. .
\]
With $a_i \dlik \phi_i (\tilde{a}) - M/n$, 
$b_i \dlik \phi_i (\tilde{b}) - (M + \delta)/n$,
and the observation $\phi = \sum_i \phi_i$, 
we conclude $a = \sum_i a_i$, $b = \sum_i b_i$,
and $a_i \in A (a), b_i \in A (b)$.
We prove $a_i, b_i \in A (a_i + b_i)$,
or equivalently  
$\tilde{a}_i \dlik  \phi_i (\tilde{a}), 
\tilde{b}_i \dlik  \phi_i (\tilde{b}) \in A (\tilde{a}_i + \tilde{b}_i)$.
From $\tilde{a} \le \tilde{b} - \delta$ 
we conclude $\tilde{b}_i (x) = \beta_i - \beta_{i - 1}$
when $\tilde{a}_i (x) > 0$.
This gives $\tilde{a}_i \cdot (\beta_i - \beta_{i - 1} - \tilde{b}_i) = 0$,
$\tilde{a}_i, \beta_i - \beta_{i - 1} - \tilde{b}_i 
\in A (\tilde{a}_i - \beta_i + \beta_{i - 1} + \tilde{b}_i)$, 
and finally
$\tilde{a}_i, \tilde{b}_i \in A (\tilde{a}_i + \tilde{b}_i)$.
\end{proof}

\begin{proposition}
If $\rho : C_b (X) \into \RealN$ is positive
and quasi-linear, then
$a \le b \imply \rho (a) \le \rho (b)$, and
$\abs{\rho (a) - \rho (b)} \le \rho (1_X) \norm{a - b}$.
\end{proposition}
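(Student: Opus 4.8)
The plan is to establish the monotonicity statement first, using the Staircase Lemma to reduce the global comparison $a \le b$ to finitely many local comparisons inside singly generated algebras, where $\rho$ is genuinely linear; the Lipschitz-type bound will then follow from monotonicity together with linearity of $\rho$ on constant functions.

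For the first assertion, fix $a \le b$ and $\delta > 0$, and apply the Staircase Lemma to get $a = a_1 + \cdots + a_n$, $b = b_1 + \cdots + b_n$ with $a_i \in A(a) \cap A(a_i + b_i)$, $b_i \in A(b) \cap A(a_i + b_i)$, and $a_i \le b_i + \delta/n$. Since each $A(c)$ is a \emph{unital} algebra, it contains all constant functions, so $b_i + (\delta/n)\,1_X - a_i$ belongs to $A(a_i + b_i)$ and is nonnegative; positivity of $\rho$ and linearity of $\rho$ on $A(a_i + b_i)$ then give $\rho(a_i) \le \rho(b_i) + (\delta/n)\,\rho(1_X)$. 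Using linearity of $\rho$ on $A(a)$ and on $A(b)$ to write $\rho(a) = \sum_i \rho(a_i)$ and $\rho(b) = \sum_i \rho(b_i)$, I sum these $n$ inequalities to obtain $\rho(a) \le \rho(b) + \delta\,\rho(1_X)$. Because $1_X \ge 0$ forces $\rho(1_X) \ge 0$, letting $\delta \downarrow 0$ yields $\rho(a) \le \rho(b)$.

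For the second assertion, put $t \dlik \norm{a - b}$, so that $b - t\,1_X \le a \le b + t\,1_X$ pointwise. Monotonicity gives $\rho(a) \le \rho(b + t\,1_X)$ and $\rho(a) \ge \rho(b - t\,1_X)$, and since $b \pm t\,1_X \in A(b)$, linearity of $\rho$ on $A(b)$ evaluates the right-hand sides as $\rho(b) \pm t\,\rho(1_X)$. Hence $-\,t\,\rho(1_X) \le \rho(a) - \rho(b) \le t\,\rho(1_X)$, which is exactly $\abs{\rho(a) - \rho(b)} \le \rho(1_X)\,\norm{a-b}$.

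The main obstacle is the first step: one cannot simply argue "$b - a \ge 0$, hence $\rho(b-a) \ge 0$, hence $\rho(b) \ge \rho(a)$", because $a$ and $b$ need not lie in a common $A(c)$ and $\rho$ is only quasi-linear, not linear. The Staircase Lemma is precisely the device that circumvents this, at the cost of the $\delta$-approximation; the remainder of the argument is routine bookkeeping with the algebra structure of the $A(c)$'s and the positivity of $\rho$.
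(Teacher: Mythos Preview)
Your proof is correct and follows the same approach as the paper: the Staircase Lemma reduces the comparison to pieces lying in common singly generated algebras, yielding $\rho(a) \le \rho(b) + \delta\,\rho(1_X)$, and the Lipschitz bound then follows from monotonicity applied to $a \le b + \norm{a-b}\,1_X$ together with linearity on $A(b)$. Your write-up is simply a more detailed version of the paper's terse argument.
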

\begin{proof}
The staircase Lemma gives
$\rho (a) = \sum_i \rho (a_i) 
\le \delta \rho (1_X) + \sum_i \rho (b_i) = \delta \rho (1_X) + \rho (b)$
from which we conclude $\rho (a) \le \rho (b)$.
$a \le b + \norm{a - b}$ 
gives $\rho [a] \le \rho [b] + \rho [1_X] \norm{a - b}$
and a switch of $b$ and $a$ gives
$\abs{\rho [a] - \rho [b]} \le \rho [1_X] \norm{a - b}$.
\end{proof}

It follows in particular that a quasi-integral is monotone and continuous.
In \cite{AARNES:QUASI} it is proven that the integral
with respect to a quasi-measure on a compact Hausdorff space
is a quasi-linear functional. 
The following Proposition is a generalization to the case of 
locally compact Hausdorff spaces.

\begin{proposition}
Let $\mu$ be a quasi-measure on a locally compact Hausdorff space.
The quasi-integral with respect to $\mu$ is a quasi-integral, 
and the change of variable  identity
\[
\mu (\phi (a)) = \mu_a (\phi)
\]
is valid for all continuous $\phi  : \RealN \into \RealN$.
The quasi-integral from a simple quasi-measure is a
simple quasi-integral.
\end{proposition}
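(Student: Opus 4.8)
The plan is to check, for the functional $\rho(a)\dlik\mu(a)=\mu_a(\id)$, the five defining conditions of a quasi-integral, then to record the change-of-variable identity, and finally to treat the simple case; write $M=\norm{a}$ throughout. First, some remarks on $\mu_a$, the Borel measure on $\RealN$ extending $\mu\circ a^{-1}$ (which exists and is unique by the Proposition on quasi-measures on $\RealN$). Since $\RealN\setminus[-M,M]$ is open with empty preimage under $a$, the measure $\mu_a$ is carried by $[-M,M]$, and $\mu_a(\RealN)=\mu(a^{-1}(\RealN))=\mu(X)=1$, so $\mu_a$ is a Borel probability measure on $[-M,M]$. Hence $\rho(a)=\int t\,\mu_a(dt)$ is a well-defined real number with $\abs{\rho(a)}\le M$, which is condition (i); if $a\ge0$ then $\mu_a$ is carried by $[0,M]$, so $\rho(a)\ge0$, which is (iii); and $(1_X)^{-1}(B)$ equals $X$ or $\emptyset$ according as $1\in B$ or not, so $\mu_{1_X}=\delta_1$ and $\rho(1_X)=1$, which is (iv).

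For linearity on each $A(a)$ together with the change-of-variable identity, note that for continuous $\phi:\RealN\into\RealN$ and every image $B$ one has $(\phi(a))^{-1}(B)=a^{-1}(\phi^{-1}(B))$; thus $\mu_{\phi(a)}$ and the pushforward $\mu_a\circ\phi^{-1}$ agree on all images, in particular on open sets, and since Borel measures on $\RealN$ are determined by their values on open sets, the two coincide. Consequently $\mu(\phi(a))=\mu_{\phi(a)}(\id)=\int t\,(\mu_a\circ\phi^{-1})(dt)=\int\phi(s)\,\mu_a(ds)=\mu_a(\phi)$, which is the asserted identity (this is in essence the computation already made just before the Proposition). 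Taking $\phi=c_1\phi_1+c_2\phi_2$ and using linearity of the integral gives $\rho(c_1\phi_1(a)+c_2\phi_2(a))=c_1\rho(\phi_1(a))+c_2\rho(\phi_2(a))$; since every element of $A(a)$ is $\phi(a)$ for some continuous $\phi$, this is condition (ii).

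The remaining condition (v), namely $\rho(a)=\sup\{\rho(k)\st k\le a,\ k\ \text{with compact support}\}$, is the heart of the matter and the only place local compactness and the compact-regularity of $\mu$ are used. Monotonicity of $\rho$ (the Proposition on positive quasi-linear functionals) gives ``$\ge$'', and a constant shift, or passage to $a^+\in A(a)$, reduces the essential point to $a\ge0$ (for compact $X$ the condition holds trivially, so its substance is visible only here). For $a\ge0$ use the layer-cake formula $\rho(a)=\int_0^M\mu(\{a>t\})\,dt$. Fix $\epsilon>0$ and a partition $0=t_0<t_1<\cdots<t_n=M$ of small mesh $\delta$; the open sets $U_i\dlik\{a>t_i\}$ decrease, and compact-regularity of $\mu$ yields compact $L_i\subset U_i$ with $\mu(L_i)>\mu(U_i)-\epsilon$, which — replacing $L_i$ by $L_i\cup L_{i+1}\cup\cdots\cup L_n$ and invoking monotonicity of $\mu$ — we may take decreasing, $L_1\supseteq\cdots\supseteq L_n$. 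Local compactness (Urysohn's lemma for locally compact Hausdorff spaces) then furnishes $g_i$ with compact support, $0\le g_i\le1$, $g_i\equiv1$ on $L_i$, and $\supp g_i\subset U_{i-1}$. The staircase function $k_0\dlik\sum_i(t_i-t_{i-1})g_i$ has compact support, satisfies $k_0\le a+\delta$ pointwise and $k_0\ge t_i$ on $L_i$, so $k\dlik(k_0-\delta)^+$ has compact support, lies below $a$, and is $\ge t_i-\delta$ on $L_i$; hence $\{k>t\}\supseteq L_i$ whenever $t<t_i-\delta$. Integrating the layer-cake formula for $k$ gives $\rho(k)\ge\sum_i(t_i-t_{i-1})(\mu(U_i)-\epsilon)-\delta$, and as $\delta\to0$ and $\epsilon\to0$ the right-hand side can be made arbitrarily close to the Riemann integral $\int_0^M\mu(\{a>t\})\,dt=\rho(a)$. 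The main obstacle is exactly this bookkeeping — producing the nested compacta, selecting Urysohn functions subordinate to the correct open sets, and controlling the error incurred in cutting $k_0$ down below $a$; once it is in place, (v) follows.

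It remains to see that a simple quasi-measure gives a simple quasi-integral. If $\mu$ takes only the values $0$ and $1$, then for each $a$ the function $t\mapsto\mu(\{a\le t\})$ is non-decreasing, right-continuous (by continuity of $\mu$ along the decreasing closed sets $\{a\le t_i\}\downarrow\{a\le t\}$), $\{0,1\}$-valued, and increases from $0$ to $1$ (as $\mu_a$ is a probability measure carried by $[-M,M]$), so it jumps at a single point $t_0$ and $\mu_a=\delta_{t_0}$. Then for continuous $\phi,\psi:\RealN\into\RealN$ we get $\rho(\phi(a)\psi(a))=\mu_a(\phi\psi)=\phi(t_0)\psi(t_0)=\mu_a(\phi)\,\mu_a(\psi)=\rho(\phi(a))\,\rho(\psi(a))$, so $\rho$ is multiplicative on each $A(a)$, i.e.\ a simple quasi-integral.
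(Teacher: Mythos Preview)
Your argument is correct; for (i)--(iv), the change-of-variable identity, and the simple case it runs parallel to the paper (the paper simply asserts that $\sigma_a$ is a simple Borel measure, hence $\delta_{\sigma(a)}$, where you spell out the distribution-function jump). The genuine divergence is in the proof of compact-regularity~(v). The paper does not build the approximating $k$ by hand: it first proves a monotone convergence theorem --- $a_\lambda \uparrow a$ implies $\mu(a_\lambda) \uparrow \mu(a)$ --- by noting that $a_\lambda^{-1}(t,\infty) \uparrow a^{-1}(t,\infty)$, invoking continuity of $\mu$ on increasing open sets, and applying Helly's second theorem to the distribution functions $F_\lambda(t)=\mu\circ a_\lambda^{-1}(-\infty,t]$; regularity then drops out in one line from local compactness, since $\{k : k \preceq X\}$ is directed and $a_k := a\cdot k \uparrow a$. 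Your layer-cake construction with nested compacta and Urysohn staircases is more explicit and self-contained --- it avoids the appeal to Helly --- but it is longer and does not yield normality as a byproduct, whereas the paper reuses normality in the Riesz representation argument and in deriving monotone convergence for general quasi-integrals. Both routes tacitly work with $a \ge 0$ at this step: your constant-shift reduction does not actually work (shifting by a nonzero constant destroys compact support of the approximants), and the paper's claim $a\cdot k \uparrow a$ equally requires $a \ge 0$; this is a shared technicality rather than a defect of your approach.
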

\begin{proof} (i) The identity was proven above for a
general Hausdorff space.
Every element in $A (a)$ is on the form $\phi (a)$,
so linearity on $A (a)$ follows from the above identity and
linearity of $\mu_a$.
Positivity follows from the change of variable identity applied
to the function $\phi (t) = t^2$.
Finally $\mu (1) = \mu (1_\RealN (a)) = \mu_a (1_\RealN) = 1$.\\
(ii) If $a_\lambda \uparrow a$, then 
$\mu \circ a_\lambda^{-1} (t, \infty) \uparrow \mu \circ a^{-1} (t, \infty)$
from the continuity of $\mu$ and 
$a_\lambda^{-1} (t, \infty) \uparrow a^{-1} (t, \infty)$.
Helly's second theorem \cite[p.53]{LAMPERTI1} 
applied to the distribution functions 
$F (t) \dlik \mu \circ a^{-1}(-\infty, t]$ and 
$F_\lambda (t) \dlik \mu \circ  a_\lambda^{-1} (-\infty, t]$
gives $\mu (a_\lambda) \uparrow \mu (a)$.\\
(iii) Now we need local compactness.
The set $\Lambda \dlik \{k \st k \preceq X \}$ is 
directed by the conventional $\le$ for real-valued functions.
Given $a \in C_b (X)$ it follows that 
$a_k \dlik a \cdot k \uparrow a$, 
when $X$ is locally compact.
This, together with (ii), imply the 
regularity $\mu (a) = \sup_{k \le a} \mu (k)$.\\
(iv) If $\mu = \sigma$ is simple, then
\begin{equation}
\sigma_a = \delta_{\sigma (a)}
\end{equation}
since $\sigma_a = \delta_z$ is a simple Borel measure,
and $\sigma (a) = \sigma_a (\id) = z$.
Multiplicativity follows from 
$\sigma (\phi (a)) = \sigma_a (\phi) = \phi (\sigma (a))$ 
applied to the function $\phi = \phi_1 \phi_2$.
\end{proof}

In the above we proved quasi-linearity and 
normality ($a_\lambda \uparrow a \imply \mu(a_\lambda) \uparrow \mu (a)$)
in the case of a quasi-measure on a general Hausdorff space.
This monotone convergence theorem for nets holds
in a general Hausdorff space for a general quasi-integral $\rho$ due to
compact-regularity:
Let $k \preceq a$ with
$\rho [a]  \le \rho [k] + \epsilon$,
which is possible due to regularity.
Monotone convergence $a_\lambda \uparrow a$ gives
uniform convergence on $\supp k$ (Dini's Lemma),
and then a $\lambda_k$ such that $k \le a_\lambda + \epsilon$ whenever
$\lambda \ge \lambda_k$.
Quasi-linearity and monotonity give
$\rho [a] \le \rho [a_\lambda] + 2 \epsilon$  if
$\lambda \ge \lambda_k$,
which proves that $\rho$ is normal.

If $K \le a \le U$,
then
$
\mu (K) \le \mu \circ a^{-1} \{1\} =
$ $
\int_{\{1\}} t \;\; \mu_a (dt) \le
\mu (a) = 
$ $\int_{(0,1]} t \;\; \mu_a (dt) \le
\int_{(0,1]} 1 \;\; \mu_a (dt) = \mu \circ a^{-1} (0,1] \le
\mu (U)$.
If $X$ is locally compact, 
this gives
\begin{equation}
\mu (U) = \sup_{k \preceq U} \mu (k), 
\;\;\;\;\; \mu (K) = \inf_{K \le k} \mu (k).
\end{equation}
This gives in particular that the integrals corresponding to
different quasi-measures are different,
and that the quasi-measure is determined 
by its corresponding integral as stated in the Riesz representation theorem.
We will now sketch the proof of
the second part of the Riesz representation theorem.
Let a quasi-integral $\rho : C_b (X) \into \RealN$ be given.
Define $\mu (U) \dlik \sup_{k \preceq U} \rho (k)$.
It follows that $\mu$ is additive and
can be extended to $\cal A$ by $\mu (F) \dlik 1 - \mu (F^c)$.
The regularity gives  $\mu (U) = \sup_{a \le U} \rho (a)$,
and therefore $\mu (F) = \inf_{F \leq a} \rho (a)$,
which gives additivity on the closed sets from normality.
Monotonity of $\rho$ gives that $U \subset F$
implies $\mu (U) \le \mu (F)$.
Normality and Urysohn gives $F \le a \le U$
from which $\mu (F) \le \mu (U)$ follows.
The set function $\mu$ is therefore additive and monotone.
Regularity follows from consideration of
$k \le \supp k \le V \le K = \overline{V} \le U$,
so $\mu$ is a quasi-measure.
We prove $\rho (a) = \mu (a)$.
The representation theorem applied
to $C (\Spec a)$ gives a Borel measure $\nu_a$
determined by 
$\phi \mapsto \rho [\phi (a)] = \int \phi (t) \;\; \nu_a (dt)$.
The claim follows if we prove
$\nu_a (\alpha, \beta) = \mu_a (\alpha, \beta)$ for an
arbitrary intervall $(\alpha, \beta)$.
Let
\[
\phi_n (t) \dlik \left\{ 
\begin{array}{ll}
n (t - \alpha)		& \alpha \le t < \alpha + 1/n \\
1			& \alpha + 1/n \le t < \beta - 1/n \\
-n (t - \beta)		& \beta - 1/n \le t < \beta\\
0			& \mbox{otherwise}
\end{array}
\right. ,
\]
so $\phi_n \uparrow (\alpha, \beta)$ and 
$\phi_n (a) \leq U \dlik a^{-1} (\alpha, \beta)$.
The monotone convergence theorem gives 
$\nu_a (\alpha, \beta) = \lim \rho (\phi_n (a))$
and from 
$\mu_a (\alpha, \beta) = \sup_{k \le U} \rho (k)$
we conclude $\mu_a = \nu_a$.
Finally we prove that $\mu$ is simple if $\rho$ is
a simple quasi-integral.
From $\rho(\phi(a)) = \phi (\rho (a))$ we conclude
$\mu_a = \delta_{\rho (a)}$.
Let $K$ be compact with $\mu (K) > 0$.
From the regularity of $\mu$ we can find $k$ and $U$
with $K \preceq k \preceq U$ and $\mu (U) \le \mu (K) + \epsilon$.
From $0 < \mu (K) \le \mu (k^{-1} \{1\}) = 1 \le \mu (U) 
\le \mu (K) + \epsilon$,
we conclude $\mu (K) = 1$.
From this and regularity it follow that $\mu$ is simple.

\section{Image-Transformations and the Aarnes Factorization Theorem.}

\begin{proposition}
If $q$ is an image-transformation,
then
\[
(i)\; q (A^c) = q (A)^c;\;\;
(ii)\; A \subset B \;\; \Rightarrow \;\; q (A) \subset q (B);\;\;
(iii)\; A \cap B = \emptyset \;\; 
      \Rightarrow \;\; q(A) \cap q(B) = \emptyset;\;\;
\]
\[
(iv)\; q (U) = \bigcup_{K \subset U} q (K);\;\;
(v)\; U_\lambda \uparrow U  \;\; \Rightarrow \;\; q(U_\lambda) \uparrow q(U) .
\]
\end{proposition}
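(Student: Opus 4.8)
The plan is a bootstrap: prove (i) first, then the special case of monotonicity for a closed set inside an open set, then (iv), then the full monotonicity (ii), then (iii), and finally (v). Throughout I use that in a Hausdorff space a compact set is closed, and that finite unions and intersections of closed (resp. open) sets stay closed (resp. open), so that the sets I decompose are genuinely images and additivity applies to them.

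First I would prove (i). Since $X$ is open in itself, $X\in{\cal A}$, and for any image $A$ the complement $A^c$ is again an image (open complements closed and vice versa), with $A\uplus A^c=X$. Additivity together with $q(X)=Y$ gives $q(A)\uplus q(A^c)=Y$, which is precisely the assertion $q(A^c)=q(A)^c$. Next I would establish the restricted monotonicity: if $F\subseteq U$ with $F$ closed and $U$ open, then $U\setminus F=U\cap F^c$ is open, hence an image, and $U=F\uplus(U\setminus F)$ is a disjoint union of images; additivity yields $q(U)=q(F)\uplus q(U\setminus F)$, so $q(F)\subseteq q(U)$. In particular $q(K)\subseteq q(U)$ for every compact $K\subseteq U$. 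This already gives the inclusion $\bigcup_{K\subset U}q(K)\subseteq q(U)$; for the reverse inclusion in (iv), take $y\in q(U)$ and apply compact-regularity to the compact set $\{y\}\subseteq q(U)$ to get a compact $L\subseteq U$ with $y\in q(L)$. Hence (iv) holds.

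Now I would finish (ii) case by case. If $U\subseteq V$ with both open, (iv) writes $q(U)$ as a union of sets $q(K)$ with $K\subseteq U\subseteq V$ compact, each contained in $q(V)$ by the restricted monotonicity, so $q(U)\subseteq q(V)$. If $F\subseteq G$ with both closed, pass to complements $G^c\subseteq F^c$, apply the open–open case, and use (i) to conclude $q(F)\subseteq q(G)$. If $U\subseteq F$ with $U$ open and $F$ closed, then $F\setminus U=F\cap U^c$ is closed and $F=U\uplus(F\setminus U)$, so additivity gives $q(U)\subseteq q(F)$; the remaining mixed case, $F\subseteq U$ with $F$ closed and $U$ open, is exactly the restricted monotonicity already proved. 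With (i) and (ii) in hand, (iii) is immediate: $A\cap B=\emptyset$ means $A\subseteq B^c$, so $q(A)\subseteq q(B^c)=q(B)^c$, i.e.\ $q(A)\cap q(B)=\emptyset$. For (v), monotonicity makes $\lambda\mapsto q(U_\lambda)$ increasing with $\bigcup_\lambda q(U_\lambda)\subseteq q(U)$; conversely, any compact $K\subseteq U=\bigcup_\lambda U_\lambda$ is covered by the directed open family $\{U_\lambda\}$, hence $K\subseteq U_\lambda$ for some $\lambda$, so $q(K)\subseteq q(U_\lambda)$, and combined with (iv) this gives $q(U)=\bigcup_{K\subset U}q(K)\subseteq\bigcup_\lambda q(U_\lambda)$; thus $q(U_\lambda)\uparrow q(U)$.

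The only real subtlety — what I would flag as the main obstacle — is the logical ordering: the open–open case of monotonicity seems to require (iv), while (iv) in turn seems to require monotonicity. The resolution is that (iv) only needs the \emph{closed-in-open} instance of monotonicity, which comes directly from additivity without any appeal to (iv); so there is no genuine circularity, and everything else reduces to routine manipulation of disjoint unions of images.
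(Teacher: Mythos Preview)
Your proof is correct and follows essentially the same approach as the paper: the same bootstrap order (i) $\to$ closed-in-open monotonicity $\to$ (iv) $\to$ full (ii) $\to$ (iii) $\to$ (v), with identical decompositions and the same singleton-compactness trick for (iv). Your handling of (iii) via full monotonicity and complements is marginally cleaner than the paper's case-split, and your explicit flagging of the apparent circularity between (ii) and (iv) is exactly the point the paper leaves implicit.
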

\begin{proof}
(i) $Y {=} q (A \cup A^c) 
{=} q (A) \uplus q(A^c)$.
(ii) and (iv) Consider $F \subset U$,
so $q (U) = q (F \uplus (U \cap F^c))
{=} q (F) \cup q (U \cap F^c) \supset q (F)$.
Since compacts are closed we find
$q (U) \supset \bigcup_{K \subset U} q (K)$.
Let $y \in q (U)$.
Since $\{y \}$ is compact,
the regularity gives a $K \subset U$ with
$y \in q (K)$ and (iv) follows.
$U \subset V$ and (iv) gives
$q(U) \subset q(V)$.
$U \subset F$ gives $q (F) = q (U \uplus (F \cap U^c))
{=} q (U) \cup q (F \cap U^c) \supset q (U)$,
and finally $F \subset G$ gives $q (F^c) \supset q (G^c)$ and
$q (F) \subset q (G)$ from (i).
(iii) Because of additivity it is sufficient to consider the case
$U \cap F = \emptyset$, but then
$q (U) {\subset} q(F^c) {=} q(F)^c$, 
so $q (U) \cap q (F) = \emptyset$.
(v) Monotonity gives $\cup_{\lambda} q(U_\lambda) {\subset} q(U)$,
so only $\supset$ remains.
Let $y \in q (U)$.
Regularity gives a compact $K \subset U = \cup_\lambda U_\lambda$ with
$y \in q (K)$,
but then 
$K \subset U_{\lambda_1} \cup \cdots \cup U_{\lambda_n} 
\subset U_{\hat{\lambda}}$.
We conclude $y \in q (U_{\hat{\lambda}})$ from monotonity.
\end{proof}

The image-transformations are arrows in a category
{\bf Image} with objects ${\cal A} (X)$.
The identity arrow $id : {\cal A} (X) \into {\cal A} (X)$
is an image-transformation and:

\begin{proposition}
The composition of two image-transformations is an image-transformation.
\end{proposition}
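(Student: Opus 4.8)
The plan is to verify directly the four defining properties of an image-transformation for the composite $r \circ q$, where $q : {\cal A}(X) \into {\cal A}(Y)$ and $r : {\cal A}(Y) \into {\cal A}(Z)$ are image-transformations. First one notes that $r\circ q$ is well defined as a map ${\cal A}(X) \into {\cal A}(Z)$: for $A \in {\cal A}(X)$ the set $q(A)$ is an image in $Y$, hence $r(q(A))$ is an image in $Z$. The normalization axiom is immediate, since $(r\circ q)(X) = r(q(X)) = r(Y) = Z$. The open-map axiom is equally immediate: if $U$ is open then $q(U)$ is open by property (ii) for $q$, and then $r(q(U))$ is open by property (ii) for $r$.

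Next I would check additivity. Suppose $A \cap B = \emptyset$. By part (iii) of the preceding Proposition applied to $q$ we get $q(A) \cap q(B) = \emptyset$, and by additivity of $q$ we have $q(A \uplus B) = q(A) \uplus q(B)$. Applying additivity of $r$ to the disjoint pair $q(A), q(B)$ then yields
\[
(r\circ q)(A \uplus B) = r\big(q(A) \uplus q(B)\big) = r(q(A)) \uplus r(q(B)) = (r\circ q)(A) \uplus (r\circ q)(B).
\]

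Finally I would treat compact-regularity, which is the one step that genuinely uses the earlier Proposition rather than just the axioms. Let $U$ be open in $X$ and let $K \subset (r\circ q)(U) = r(q(U))$ be compact. Since $q(U)$ is open in $Y$ and $r$ is compact-regular, there is a compact $L \subset q(U)$ with $K \subset r(L)$. Now $L$ is a compact subset of the open set $U$'s image $q(U)$, so by compact-regularity of $q$ there is a compact $M \subset U$ with $L \subset q(M)$. Monotonicity of $r$ (part (ii) of the Proposition) gives $r(L) \subset r(q(M))$, whence $K \subset r(L) \subset r(q(M)) = (r\circ q)(M)$ with $M \subset U$ compact, as required. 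I do not expect a real obstacle here; the only point to be careful about is invoking monotonicity of $r$ in the last step, which is legitimate since $r$, being an image-transformation, is monotone by the preceding Proposition. Consequently $r\circ q$ satisfies all four properties and the composition of image-transformations is an image-transformation; associativity of set-map composition then makes {\bf Image} a genuine category.
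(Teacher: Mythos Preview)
Your proof is correct and follows essentially the same route as the paper's: verify well-definedness, normalization, preservation of open sets, and additivity directly from the axioms, then establish compact-regularity by applying regularity of $r$ to get $L \subset q(U)$, regularity of $q$ to get $M \subset U$ with $L \subset q(M)$, and monotonicity of $r$ to conclude. The only cosmetic difference is that the paper writes the additivity step in one line using the $\uplus$ notation (which already encodes disjointness of the images), whereas you make the appeal to part~(iii) of the preceding Proposition explicit.
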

\begin{proof}
Let $q: {\cal A} (X) \into {\cal A} (Y)$ and 
$p: {\cal A} (Y) \into {\cal A} (Z)$ be image-transformations.
$p \circ q : {\cal A} (X) \into {\cal A} (Z)$ is well defined, and:
$p \circ q (X) = p (Y) = Z$;
$p \circ q (U) = p (q (U)^{open}) = p (q (U))^{open}$; and
$p \circ q (A \uplus B) =$ 
$p ( q (A) \uplus q (B)) = $ 
$p ( q (A)) \uplus p( q (B))$.
It remains to prove regularity.
Assume $K \subset p \circ q (U)$.
$q (U)$ is open, so the regularity of $p$ gives 
$L \subset q (U)$ such that $K \subset p (L)$.
The regularity of $q$ gives $M \subset U$
such that $L \subset q (M)$.
We conclude 
$K \subset p (L) \subset p (q (M))$ from the monotonity of $p$.
\end{proof}

Let $X^\wedge$ denote the set of functions 
$\sigma : C_b (X) \into \RealN$ with the property
$\sigma (\phi (a)) = \phi (\sigma (a))$.
In the proof of the quasi-multiplicativity of the quasi-integral
from a simple quasi-measure we proved that $X^* \subset X^\wedge$.
We identify $X$ with a subset of $X^*$ by the injection
$x \mapsto \delta_x$.
A more abstract characterization of $X$ as a subset
of $X^*$ is given by
\begin{proposition}
If $\mu \in X^*$ is subadditive on open sets;
$\mu (U \cup V) \le \mu (U) + \mu (V)$,
then $\mu = \delta_x$.
In particular $\RealN^* \simeq \RealN$. 
\end{proposition}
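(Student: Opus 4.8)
The plan is to produce a ``support'' for $\mu$ out of its zero sets and show that this support is a single point. Since $\mu\in X^*$ is simple it takes only the values $0$ and $1$ on $\mathcal A(X)$, and from $\mu(X)=\mu(X\uplus\emptyset)=\mu(X)+\mu(\emptyset)$ we get $\mu(\emptyset)=0$. Let $\mathcal U_0$ be the collection of open sets $U$ with $\mu(U)=0$. The first key step is that subadditivity makes $\mathcal U_0$ directed by inclusion: if $\mu(U)=\mu(V)=0$ then $\mu(U\cup V)\le\mu(U)+\mu(V)=0$. Hence, setting $U_0\dlik\bigcup\mathcal U_0$, the family $\mathcal U_0$ is a net increasing to $U_0$, and continuity of the quasi-measure along increasing nets of open sets gives $\mu(U_0)=\sup\{\mu(U):U\in\mathcal U_0\}=0$. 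Put $F_0\dlik U_0^c$. This is the smallest closed set with $\mu(F_0)=1-\mu(U_0)=1$, and it is nonempty because $\mu(F_0)=1\ne0=\mu(\emptyset)$. Note also that any closed $F$ with $\mu(F)=1$ contains $F_0$, since then $F^c\in\mathcal U_0$, hence $F^c\subset U_0$.

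Next I would show $F_0$ is a singleton. First observe that if $x\in F_0$ and $U$ is open with $x\in U$, then $\mu(U)=1$: otherwise $U\in\mathcal U_0$, so $U\subset U_0=F_0^c$, contradicting $x\in F_0$. Now suppose $x,y\in F_0$ with $x\ne y$. Since $X$ is Hausdorff, choose disjoint open sets $U\ni x$ and $V\ni y$. Then $\mu(U)=\mu(V)=1$ by the preceding remark, while $U\uplus V$ is open and contained in $X$, so additivity and monotonicity yield $2=\mu(U)+\mu(V)=\mu(U\uplus V)\le\mu(X)=1$, a contradiction. Hence $F_0=\{x\}$ for a unique $x\in X$.

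Finally I would verify $\mu=\delta_x$ on all of $\mathcal A(X)$. For open $U$: if $\mu(U)=0$ then $U\subset U_0=\{x\}^c$, so $x\notin U$; if $\mu(U)=1$ then compact-regularity gives a compact, hence closed, $K\subset U$ with $\mu(K)=1$, whence $\{x\}=F_0\subset K\subset U$ and $x\in U$. Thus $\mu(U)=\delta_x(U)$ for every open $U$, and then $\mu(F)=1-\mu(F^c)=1-\delta_x(F^c)=\delta_x(F)$ for every closed $F$, so $\mu=\delta_x$. For the last assertion, the earlier Proposition identifies every quasi-measure on $\RealN$ with the restriction of a Borel measure, which is subadditive on open sets; hence every $\mu\in\RealN^*$ is of the form $\delta_x$ with $x\in\RealN$, and conversely each $\delta_x$ lies in $\RealN^*$. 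Since weak convergence $\delta_{x_\lambda}\to\delta_x$ is equivalent to $x_\lambda\to x$ (test against bounded continuous functions), the bijection $x\mapsto\delta_x$ is a homeomorphism, giving $\RealN^*\simeq\RealN$.

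I expect the crux to be exactly the two structural observations that drive the argument: that subadditivity is precisely what keeps $\mathcal U_0$ directed, so that $U_0$ really has measure zero and $F_0$ is a genuine support; and that Hausdorffness combined with the bound $\mu\le1$ on disjoint unions forbids $F_0$ from having two points. The remaining steps are routine applications of the monotonicity, additivity, continuity and compact-regularity of quasi-measures already recorded above.
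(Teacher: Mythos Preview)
Your proof is correct and follows essentially the same route as the paper. The paper works with the family $\mathcal F_\mu=\{F:\mu(F)=1\}$ and shows it is closed under intersections, while you work with the complementary family $\mathcal U_0=\{U:\mu(U)=0\}$ and show it is closed under unions; these are literally the same observation, and the resulting support set $F_0$ is the same. Your exclusion of a second point via disjoint Hausdorff neighborhoods each of measure $1$ is a slight streamlining of the paper's version, which instead argues via $\mu\{y\}=0$ and outer regularity at $\{y\}$; both reach the same contradiction.
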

\begin{proof}
The family 
${\cal F}_\mu \dlik \{F \st \mu (F) = 1 \}$ is
closed under intersection from complementation
of the subadditivity on open sets.
The continuity of $\mu$ gives measure one to the set
$F \dlik \cap_{G \in {\cal F}_\mu } G$,
and in particular $F \neq \emptyset$.
Assume that $F$ contains two different points $x,y$.
The additivity contradicts $\mu \{x \} = \mu \{y \} = 1$,
so we can assume $\mu \{y\} = 0$.
From $\mu \{y\} = \inf_{y \in U} \mu (U)$ it follows
that there exists an open set $U \ni y$ with $\mu (U) = 0$.
Then $U^c \in {\cal F}_\mu$,
which contradicts $y \in F$.
We have proven $\mu \{x \} = 1$.
The claim $\mu = \delta_x$ follows.\\
Since $\mu \in \RealN^*$ is the restriction of a
Borel measure, it is subadditive on open sets.
\end{proof}
\begin{proposition}
The spectrum of $a$ is 
$\Spec{a}$
$ = \overline{ {a (X)}} = \overline{ {X^* (a)}} = X^\wedge (a).$
\end{proposition}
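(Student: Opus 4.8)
The plan is to prove the chain of set inclusions
$$ a(X)\ \subseteq\ X^*(a)\ \subseteq\ X^\wedge(a)\ \subseteq\ \overline{a(X)}\ \subseteq\ X^\wedge(a), $$
together with the identity $\Spec a=\overline{a(X)}$; taking closures throughout this chain then forces $\Spec a=\overline{a(X)}=\overline{X^*(a)}=X^\wedge(a)$. The first two inclusions are immediate: under the embedding $x\mapsto\delta_x$ we have $X\subseteq X^*\subseteq X^\wedge$ (the inclusion $X^*\subseteq X^\wedge$ was noted just before the Proposition), and $\delta_x(a)=a(x)$, so $a(X)\subseteq X^*(a)\subseteq X^\wedge(a)$.

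Next I would dispose of $\Spec a=\overline{a(X)}$. The element $a-t\,1_X$ is invertible in $C_b(X)$ iff $\inf_{x\in X}\abs{a(x)-t}>0$, i.e.\ iff $t\notin\overline{a(X)}$; and when this holds, $s\mapsto 1/(s-t)$ is a bounded continuous function on the closed set $\overline{a(X)}$, so it extends (Tietze) to some $\psi\in C(\RealN,\RealN)$, and then $\psi(a)=(a-t)^{-1}$, showing $a-t\,1_X$ is in fact already invertible in $A(a)$. So invertibility in $A(a)$ and in $C_b(X)$ agree, and $\Spec a$, the set of $t$ for which $a-t\,1_X$ is not invertible, equals $\overline{a(X)}$.

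Then $X^\wedge(a)\subseteq\overline{a(X)}$. Let $\sigma\in X^\wedge$ and put $t\dlik\sigma(a)$; applying the defining relation to the constant function $\phi_0\equiv 1$ gives $\sigma(1_X)=\phi_0(\sigma(a))=1$. Suppose, for contradiction, $t\notin\overline{a(X)}$. Since $\{t\}$ and $\overline{a(X)}$ are disjoint closed subsets of the normal space $\RealN$, Urysohn's lemma yields $\phi\in C(\RealN,\RealN)$ with $\phi(t)=0$ and $\phi\equiv 1$ on $\overline{a(X)}$; then $\phi(a)(x)=\phi(a(x))=1$ for every $x\in X$, so $\phi(a)=1_X$ and hence $1=\sigma(1_X)=\sigma(\phi(a))=\phi(\sigma(a))=\phi(t)=0$, which is absurd. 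Thus $t\in\overline{a(X)}$.

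The remaining inclusion $\overline{a(X)}\subseteq X^\wedge(a)$ is the one genuinely non-routine point, and is precisely the assertion that $X^\wedge$ — unlike $X^*$ — is large enough to realise every boundary point of $a(X)$ as some $\sigma(a)$. Given $t\in\overline{a(X)}$, the sets $a^{-1}((t-\epsilon,t+\epsilon))$, $\epsilon>0$, are nonempty and form a filter base on $X$; extend it to an ultrafilter $\mathcal U$. Then $a$ converges to $t$ along $\mathcal U$, and for each $b\in C_b(X)$ the bounded function $b$ has a well-defined ultrafilter limit $\sigma(b)\dlik\lim_{\mathcal U}b(x)$ lying in the compact set $\overline{b(X)}$. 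For any $\phi\in C(\RealN,\RealN)$, continuity of $\phi$ gives $\sigma(\phi(b))=\lim_{\mathcal U}\phi(b(x))=\phi(\lim_{\mathcal U}b(x))=\phi(\sigma(b))$, so $\sigma\in X^\wedge$ and $\sigma(a)=t$. This closes the chain: $X^\wedge(a)=\overline{a(X)}=\Spec a$; and since $a(X)\subseteq X^*(a)\subseteq X^\wedge(a)=\overline{a(X)}$, passing to closures gives $\overline{a(X)}\subseteq\overline{X^*(a)}\subseteq\overline{a(X)}$, hence $\overline{X^*(a)}=\overline{a(X)}$ as well. (If one prefers to avoid ultrafilters, the Stone--\v{C}ech compactification works identically: pick $p\in\beta X$ with $a^\beta(p)=t$ and set $\sigma(b)=b^\beta(p)$.)
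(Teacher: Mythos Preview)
Your proof is correct. The Urysohn contradiction argument for $X^\wedge(a)\subseteq\overline{a(X)}$ is exactly what the paper does, and the inclusions coming from $X\subset X^*\subset X^\wedge$ are handled identically. Two points of departure are worth noting.

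First, the paper simply \emph{defines} $\Spec a \dlik \overline{a(X)}$, remarking that this agrees with the general $C^*$-algebra definition; your Tietze/invertibility argument actually proves this agreement, which is extra but harmless.

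Second, and more substantively, the paper does not construct elements of $X^\wedge$ hitting an arbitrary $t\in\overline{a(X)}$. Instead it argues that $X^\wedge$ is a closed subset of the Tychonoff product $\prod_{b}\Spec b$ (this is established in the paragraph immediately following the Proposition, so the proof contains a short forward reference), hence compact; since $\sigma\mapsto\sigma(a)$ is continuous, the image $X^\wedge(a)$ is compact and therefore already closed, giving $\overline{X^\wedge(a)}=X^\wedge(a)$. Your ultrafilter (equivalently Stone--\v{C}ech) construction reaches the same conclusion by exhibiting, for each boundary point $t$, an explicit $\sigma\in X^\wedge$ with $\sigma(a)=t$. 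The two arguments are close cousins---an ultrafilter on $X$ is precisely a point of the product $\prod_b\overline{b(X)}$ obtained as a limit along that ultrafilter---but yours is self-contained and avoids the forward reference, at the cost of invoking the ultrafilter lemma directly rather than packaging it inside Tychonoff.
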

\begin{proof}
We define $\Spec{a} = \overline{a (X)}$ in agreement with
the more general $C^*$ definition. 
Assume $\sigma \in X^{\wedge}$.
We prove $\sigma (a) \in \Spec{a}$ by a contradiction argument.
Assume $\sigma [a] \not\in \Spec{a}$.
Urysohn's Lemma gives 
$\Spec{a} \preceq \phi \preceq \{\sigma (a) \}^c$,
and 
$0 = \phi (\sigma (a)) = \sigma (\phi (a)) = \sigma (1_X) = 1$ 
is a contradiction.
The inclusion $X \subset X^* \subset X^\wedge$ 
together with $\sigma (a) \in \Spec{a}$ gives
$\Spec{a} = \overline{a (X)} = \overline{X^* (a)} = \overline{X^\wedge (a)}$.
Below we identify $X^\wedge$ with a compact Hausdorff
space, and the continuity of $\sigma \mapsto \sigma (a)$ gives
that $\overline{X^\wedge (a)} = X^\wedge (a)$ is compact.
\end{proof}

Each $\Spec a$ is a compact Hausdorff space and
Tychonoff gives us the compact Hausdorff product space 
$\prod_{a \in C_b (X)} \Spec a$,
which is the family of real valued functions $\psi$ with 
$\psi (a) \in \Spec a$ and initial topology from
the functions $\psi \mapsto \psi (a)$.
This gives the inclusions
\begin{equation}
X \subset X^* \subset X^\wedge \subset \prod_{a \in C_b (X)} \Spec a,
\end{equation}
and corresponding relative topologies on
$X, X^*$, and $X^\wedge$. 
The original topology on $X$ equals the relative topology on $X$
if $X$ is a Tychonoff space,
and in particular if $X$ is a locally compact Hausdorff space.
$X^{\wedge}$ is compact since it is closed: 
$\sigma [\phi (a)] = \lim \sigma_\lambda [\phi (a)] 
= \phi (\lim \sigma_\lambda [a]) = \phi (\sigma [a])$.
$X^*$ is a Hausdorff space as a subset of a Hausdorff space.
It is to be expected that $X^*$ is locally compact when
$X$ is locally compact,
but this is an open question.

\begin{proposition}
Let $X$ be a locally compact Hausdorff space.
The canonical image-transformation
$[*]: {\cal A} (X) \into {\cal A} (X^*)$ given by
\[
[*] (A) \dlik A^* \dlik \{\sigma \in X^* \st \sigma (A) = 1 \}
\]
is an image-transformation.
\end{proposition}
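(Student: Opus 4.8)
The plan is to check, in order, that $[*]$ sends images to images and then that it satisfies the four axioms for an image-transformation. For well-definedness, note first that for an open $U$ the compact-regularity of a quasi-measure (available here since $X$ is locally compact) gives $\sigma(U) = \sup_{k \preceq U}\sigma(k)$ for every $\sigma \in X^*$; since $\sigma(U) \in \{0,1\}$ this shows $\sigma(U) = 1$ iff $\sigma(k) > 0$ for some $k \preceq U$, so
\[
U^* = \bigcup_{k \preceq U}\{\sigma \in X^* : \sigma(k) > 0\},
\]
which is open because each map $\sigma \mapsto \sigma(k)$ is continuous on $X^*$. For a closed $F$, additivity of $\sigma$ on $F \uplus F^c = X$ gives $\sigma(F) = 1 - \sigma(F^c)$, so $F^* = X^* \setminus (F^c)^*$ is closed; thus $[*](A) \in {\cal A}(X^*)$ always, and $[*](A^c) = [*](A)^c$. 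Axiom (i) is then immediate since $\sigma(X) = 1$ for every $\sigma$, and (ii) is exactly the openness of $U^*$ just obtained.

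For additivity (iii), take disjoint images $A, B$ whose union is again an image. If some $\sigma$ lay in $A^* \cap B^*$, additivity and monotonicity would give $\sigma(A \uplus B) = \sigma(A) + \sigma(B) = 2 > 1 = \sigma(X)$, which is impossible; so $A^* \cap B^* = \emptyset$. Since $\sigma$ is simple, $\sigma(A), \sigma(B) \in \{0,1\}$, so $\sigma(A) + \sigma(B) = \sigma(A \uplus B)$ forces $\sigma \in (A \uplus B)^*$ precisely when exactly one of $\sigma(A), \sigma(B)$ equals $1$, i.e.\ precisely when $\sigma \in A^* \uplus B^*$.

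The substantive point is compact-regularity (iv): given an open $U \subseteq X$ and a compact $K \subseteq U^* = [*](U)$, I must exhibit a compact $L \subseteq U$ with $K \subseteq L^*$. From the displayed description of $U^*$ and compactness of $K$, pick $k_1, \dots, k_n \preceq U$ with $K \subseteq \bigcup_i \{\sigma : \sigma(k_i) > 0\}$ and set $k \dlik \max_i k_i$; then $k \preceq U$, since $\supp k \subseteq \bigcup_i \supp k_i$ is a compact subset of $U$, and $\sigma(k) > 0$ for every $\sigma \in K$ by monotonicity. As $\sigma \mapsto \sigma(k)$ is continuous and $K$ compact, $c \dlik \min_{\sigma \in K}\sigma(k)$ is strictly positive. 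Now put $L \dlik \{x \in X : k(x) \ge c\}$; this is closed and contained in $\supp k$, hence compact, with $L \subseteq \supp k \subseteq U$. Finally, for each $\sigma \in K$ simplicity gives $\sigma_k = \delta_{\sigma(k)}$ with $\sigma(k) \ge c$, whence
\[
\sigma(L) = \sigma\bigl(k^{-1}[c,\infty)\bigr) = \sigma_k\bigl([c,\infty)\bigr) = 1,
\]
so $\sigma \in L^*$; thus $K \subseteq L^*$ and (iv) holds.

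I expect the last step to be the only real obstacle: everything in (i)--(iii) and the well-definedness is bookkeeping with properties of simple quasi-measures already in hand, whereas (iv) requires turning the purely qualitative covering data for $K$ into a \emph{uniform} positive lower bound $c$ --- which is exactly what compactness of $K$ buys --- so that a single test function $k$ and its superlevel set $L = \{k \ge c\}$ can serve simultaneously for all $\sigma \in K$.
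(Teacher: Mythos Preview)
Your argument is correct throughout. Steps (i)--(iii) and the well-definedness of $[*]$ match the paper's proof essentially verbatim: the paper also writes $U^* = \bigcup_{a \le U}\{\sigma : \sigma(a) > 0\}$ to get openness, takes complements for $F^*$, and reads additivity off $\sigma(A) + \sigma(B) = \sigma(A \uplus B) \in \{0,1\}$.

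The genuine difference is in the regularity step (iv). The paper stays entirely at the set level: it introduces $\mathcal{O}_c(U) \dlik \{V : \overline{V} \text{ compact}, \overline{V} \subset U\}$, observes (from local compactness and compact-regularity of each $\sigma$ as a set function) that $U^* = \bigcup_{V \in \mathcal{O}_c(U)} V^*$, and then uses that $\{V^* : V \in \mathcal{O}_c(U)\}$ is an open cover of $K$ directed by inclusion to extract a single $V$ with $K \subset V^*$; then $L \dlik \overline{V}$ works by monotonicity of $[*]$. Your route instead passes through the quasi-integral structure: you merge finitely many test functions into one $k$, use compactness of $K$ and continuity of $\sigma \mapsto \sigma(k)$ to obtain the uniform lower bound $c$, and then invoke the identity $\sigma_k = \delta_{\sigma(k)}$ to conclude $\sigma(k^{-1}[c,\infty)) = 1$. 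Both arguments are clean; the paper's avoids the detour through integrals and the $\sigma_a = \delta_{\sigma(a)}$ identity, while yours makes more explicit use of the Riesz correspondence already in hand and illustrates nicely how a uniform quantitative bound on $K$ produces the compact witness $L$.
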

\begin{proof}
Elements in $X^*$ are
quasi-integrals, so
$
U^* = \{\sigma \st \sigma (U) = 1 \} =
 \{\sigma \st \exists a \le U \;\; \sigma (a) > 0 \} =
\bigcup_{a \le U} \{\sigma \st  \sigma (a) > 0 \}
$
is open, and
$
F^* = \{\sigma \st \sigma (F) = 0 \}^c =
 \{\sigma \st \sigma (F^c) = 1 \}^c =
((F^c)^*)^c
$
is closed.
Together with $X^* = \{\sigma \st \sigma (X) = 1 \}$ we
have verified the first two axioms.
Additivity follows from
$
(F \uplus G)^* = \{\sigma \st 1 = \sigma (F) + \sigma (G) \} =
F^* \uplus G^* .
$
Put 
${\cal O}_c (U) \dlik \{V \st U \supset \overline{V} \mbox{ is compact} \}$. 
Compact-regularity of $\sigma$ and local compactness gives
$
U^* = \{\sigma \st \exists V \in {\cal O}_c (U) \;\; \sigma (V) = 1 \} =
\bigcup_{ V \in {\cal O}_c (U)  } V^* 
$,
since $[*]$ is monotone.
Let $K \subset U^*$.
Since ${\cal O}_c (U)$ is closed under unions,
the above gives $V \in {\cal O}_c (U)$ with $K \subset V^*$,
and $L = \overline{V}$ gives $K \subset L^*$,
so $[*]$ is regular.
\end{proof}

\begin{proposition}
Let $X$ and $Y$ be Hausdorff spaces.
If $f: Y \into X$ is continuous,
then $f^{-1}: {\cal A} (X) \into {\cal A} (Y)$ is an image-transformation.
\end{proposition}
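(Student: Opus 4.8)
The plan is to verify directly the four defining axioms of an image-transformation for the map $q \dlik f^{-1} : {\cal A}(X) \into {\cal A}(Y)$, regarded as a transformation ``from $X$ to $Y$'' in the sense of the definition in the introduction (note that $f$ runs $Y \into X$, so its preimage map runs the other way). First I would check that $q$ is even well defined and that axiom (ii) holds: since $f$ is continuous, $f^{-1}(U)$ is open in $Y$ for every open $U \subset X$ and $f^{-1}(F)$ is closed for every closed $F \subset X$, so $f^{-1}(A) \in {\cal A}(Y)$ whenever $A \in {\cal A}(X)$, and in particular $q$ carries open images to open images. The normalization axiom (i) is then immediate, $q(X) = f^{-1}(X) = Y$, because $f$ takes its values in $X$.

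Next I would dispatch additivity, axiom (iii). For images $A, B$ with $A \cap B = \emptyset$, the elementary identities $f^{-1}(A \cup B) = f^{-1}(A) \cup f^{-1}(B)$ and $f^{-1}(A) \cap f^{-1}(B) = f^{-1}(A \cap B) = f^{-1}(\emptyset) = \emptyset$ show that $q(A \uplus B) = q(A) \uplus q(B)$; no topology beyond what was already used is needed here.

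The only axiom that calls for an actual (if short) argument is compact-regularity, axiom (iv), and this is where I would concentrate. Given an open set $U \subset X$ and a compact set $K \subset q(U) = f^{-1}(U)$ in $Y$, I would simply put $L \dlik f(K)$. Continuity of $f$ makes $L$ compact; the inclusion $K \subset f^{-1}(U)$ translates to $f(K) \subset U$, i.e. $L \subset U$; and $K \subset f^{-1}(f(K)) = f^{-1}(L) = q(L)$. Hence $L$ is the required compact subset of $U$ and $q$ is an image-transformation.

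I do not expect a genuine obstacle: the whole content is that preimage under a continuous map respects the open/closed dichotomy and the Boolean operations, while the fact that a continuous image of a compact set is compact supplies the witness $L = f(K)$ for regularity. The one point worth stating carefully is the bookkeeping of which space each set lives in ($U, L \subset X$ versus $K \subset Y$), so that the regularity condition is being verified in the correct direction.
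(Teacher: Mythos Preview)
Your proposal is correct and matches the paper's own proof essentially verbatim: the paper declares that only regularity needs an argument and then takes $L = f(K)$, exactly as you do. Your additional verification of axioms (i)--(iii) is accurate but the paper simply omits it as routine.
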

\begin{proof}
Only the regularity needs a proof.
If $K \subset f^{-1} (U)$,
then $L = f (K) \subset U$,
and $K \subset f^{-1} (L)$.
\end{proof}

The special case $X = \RealN$ of the above Proposition was used 
when we defined integration.
It turns out that the two above examples of 
image-transformations covers all cases,
in the sense that all image-transformation
are on the form $w^{-1} \circ [*]$ for some
continuous $w: Y \into X^*$.
We need some other aspects of the theory in order to prove this.
Our main motivation for the study of image-transformations
is the following result.

\begin{proposition}
Let $q$ be an image-transformation 
from $X$ to $Y$.
Any quasi-measure $\mu$ on $Y$ is pulled back
to a quasi-measure $q^* \mu \dlik \mu \circ q$ on $X$.
The adjoint $q^*$ maps $Y^*$ into $X^*$.
The adjoint map is anti-multiplicative:
$(p \circ q)^* = q^* \circ p^*$.
\end{proposition}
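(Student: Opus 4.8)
The plan is to verify the three assertions in turn; each one reduces to feeding the axioms defining an image-transformation --- together with the elementary consequences $q(A^c)=q(A)^c$, monotonicity of $q$, and $q(A)\cap q(B)=\emptyset$ for disjoint images $A,B$, all established in the first proposition of this section --- into the defining properties of a quasi-measure.

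\emph{Step 1: $q^*\mu$ is a quasi-measure on $X$.} Every compact set is closed, hence an image, so $q(K)$ makes sense and $q^*\mu=\mu\circ q$ is a well-defined real function on ${\cal A}(X)$. Normalization is immediate from the axiom $q(X)=Y$: $(q^*\mu)(X)=\mu(Y)=1$. For additivity, if $A\cap B=\emptyset$ then $q(A\uplus B)=q(A)\uplus q(B)$ is again a disjoint union of images, so $(q^*\mu)(A\uplus B)=\mu(q(A))+\mu(q(B))=(q^*\mu)(A)+(q^*\mu)(B)$ by additivity of $\mu$ on disjoint images.

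\emph{Step 2: compact-regularity --- the only real step.} I would need to show $\mu(q(U))=\sup\{\mu(q(K))\st K\subset U\ \text{compact}\}$ for every open $U\subset X$. The inequality ``$\ge$'' is monotonicity: $K\subset U$ gives $q(K)\subset q(U)$, hence $\mu(q(K))\le\mu(q(U))$. For ``$\le$'', observe that $q(U)$ is open in $Y$ (axiom (ii)); compact-regularity of $\mu$ therefore yields, for each $\epsilon>0$, a compact $K'\subset q(U)$ with $\mu(K')>\mu(q(U))-\epsilon$. The compact-regularity axiom (iv) for $q$ now produces a compact $L\subset U$ with $K'\subset q(L)$, and monotonicity of $\mu$ gives $\mu(q(L))\ge\mu(K')>\mu(q(U))-\epsilon$. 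Letting $\epsilon\downarrow 0$ gives the reverse inequality, so $q^*\mu$ is a quasi-measure. I expect this dovetailing of the two compact-regularity conditions to be the crux of the argument; the rest is bookkeeping.

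\emph{Step 3: simplicity and anti-multiplicativity.} If $\sigma\in Y^*$, then $\sigma$ is $\{0,1\}$-valued on images, so $\sigma\circ q$ is again $\{0,1\}$-valued; combined with Steps 1--2 this shows $q^*\sigma$ is a simple quasi-measure, i.e.\ $q^*$ maps $Y^*$ into $X^*$. Finally, anti-multiplicativity is merely associativity of composition of set functions: for $p:{\cal A}(Y)\into{\cal A}(Z)$ and a quasi-measure $\mu$ on $Z$ one has $(p\circ q)^*\mu=\mu\circ(p\circ q)=(\mu\circ p)\circ q=q^*(p^*\mu)$, whence $(p\circ q)^*=q^*\circ p^*$.
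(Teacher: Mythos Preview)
Your proof is correct and follows essentially the same line as the paper's: additivity is immediate from the additivity axiom for $q$, compact-regularity is obtained by first invoking the compact-regularity of $\mu$ on the open set $q(U)$ to find a compact $K'\subset q(U)$ and then the compact-regularity axiom for $q$ to pull $K'$ back to a compact $L\subset U$ with $K'\subset q(L)$, and the remaining claims are one-line verifications. The only cosmetic difference is that you phrase the regularity estimate with an explicit $\epsilon$, whereas the paper works directly with suprema; the logical content is identical.
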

\begin{proof}
$\nu \dlik q^* \mu$ is clearly additive:
$\nu (A \uplus B) = \mu (q (A) \uplus q (B)) = \nu (A) + \nu (B)$.
$K \subset U \imply 
\mu (q (K)) \le \mu (q (U))$, so regularity will follow from
$\mu ( q(U)) \le \sup_{K \subset U} \mu (q (K))$.
Let $L \subset q (U)$.
Since $\mu$ is regular we are left with the proof of
$\mu (L) \le \sup_{K \subset U} \mu (q (K))$.
$q$ is regular, so there is a compact $K \subset U$
with $L \subset q (K)$.
The monotonity of $\mu$ gives $\mu (L) \le \mu (q (K))$.\\
$q^* (Y^*) \subset X^*$ follows from
(i) $\sigma (q (X)) = \sigma (Y) = 1$ and 
(ii) $\sigma (q (A)) \in \{0, 1 \}$.
The final claim is
$
(p \circ q)^* \mu (A) = 
\mu \circ p \circ q (A) =
q^* (p^* \mu) (A)
$.
\end{proof}

Integration with respect to a quasi-measure 
gives a quasi-integral.
One may also integrate with respect to an 
image-transformation, and the result is a quasi-homomorphism.
If $a \mapsto r (a) (y)$ is positive and quasi-linear for each
$y$, then $a \mapsto r (a)$ is monotone and
$\norm{r (a) - r(b)} \leq \norm{r (1_X)} \; \norm{a - b}$.
A quasi-homomorphism is therefore monotone and continuous.
If $r (\phi (a)) = \phi (r (a))$ for all continuous
$\phi : \RealN \into \RealN$,
then clearly axiom (i) is satisfied.
The condition  $r (\phi (a)) = \phi (r (a))$ is also 
necessary, from uniform approximation of $\phi$ with polynomials.

\begin{proposition}
Let $q$ be an image-transformation from a locally compact
Hausdorff space $X$ to a Hausdorff space $Y$.
The integral $q (a)$ defined by
$q (a) (y) \dlik r_q (a) (y) \dlik (q^* \delta_y) (a)$ is a quasi-homomorphism
from $C_b (X)$ to $C_b (Y)$,
and
$q \circ a^{-1} = q (a)^{-1}$.
If $\mu$ is a quasi-measure on $Y$,
then $(q^* \mu) (a) = \mu (q(a))$.
If $q$ and $p$ are
composable image-transformations,
then $r_{q \circ p} = r_q \circ r_p$.
\end{proposition}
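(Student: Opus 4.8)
The plan is to construct $q(a)$ pointwise from the simple quasi-measures $\mu_y \dlik q^*\delta_y = \delta_y \circ q$, establish its continuity, and then read off the remaining identities from results already available. First I observe that for each $y \in Y$ the adjoint proposition makes $\mu_y$ a simple quasi-measure on $X$, so, $X$ being locally compact Hausdorff, the proposition on quasi-integrals gives that $a \mapsto \mu_y(a)$ is a simple quasi-integral: positive, quasi-linear, multiplicative on each $A(a)$, with $\mu_y(\phi(a)) = \phi(\mu_y(a))$ for continuous $\phi$ and $(\mu_y)_a = \delta_{\mu_y(a)}$. Since $\mu_y \in X^* \subset X^\wedge$ and $X^\wedge(a) = \Spec a$, we get $q(a)(y) = \mu_y(a) \in \Spec a$, so $q(a)$ is a bounded function from $Y$ into the compact set $\Spec a$. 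Hence the only remaining point for $q(a) \in C_b(Y)$ is continuity, and this is the heart of the matter.

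For continuity it is enough to show $\{y : q(a)(y) > \alpha\}$ is open for every $\alpha$, since $\{q(a) < \alpha\} = \{q(-a) > -\alpha\}$ by $\mu_y(-a) = -\mu_y(a)$. Applying $\mu_y(\phi(a)) = \phi(\mu_y(a))$ with $\phi(t) = \max(t - \alpha,0)$ gives $q(\phi(a))(y) = \max(q(a)(y) - \alpha, 0)$, so writing $b \dlik \phi(a) \ge 0$ we have $\{q(a) > \alpha\} = \{y : q(b)(y) > 0\}$. Now local compactness enters through regularity of the quasi-integral $\mu_y$: $q(b)(y) = \mu_y(b)$ equals the supremum of $\mu_y(k)$ over compactly supported $k$ with $0 \le k \le b$, so $q(b)(y) > 0$ iff some such $k$ has $\mu_y(k) > 0$. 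For such a $k$, $(\mu_y)_k = \delta_{\mu_y(k)}$, hence $\mu_y(k) = q(k)(y) \ge 0$, and $\mu_y(k) > 0$ iff $\mu_y(k^{-1}(0,\infty)) = (\mu_y)_k((0,\infty)) = 1$ iff $y \in q(k^{-1}(0,\infty))$. Thus $\{q(b) > 0\} = \bigcup_k q(k^{-1}(0,\infty))$, which is open because each $k^{-1}(0,\infty)$ is open in $X$ and $q$ sends open sets to open sets. This gives $q(a) \in C_b(Y)$.

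With $q(a) \in C_b(Y)$ secured, the quasi-homomorphism axioms come cheaply. Axiom (i): the identity $q(\phi(a)) = \phi(q(a))$, already proved pointwise, together with $q(a)(Y) \subset \Spec a$ (which makes $\phi \mapsto \phi(q(a))$ well-defined on $A(a)$), shows $q$ restricts to an algebra homomorphism $A(a) \into A(q(a))$. Axiom (ii) is simply the regularity $q(a)(y) = \mu_y(a) = \sup_{k\le a}\mu_y(k) = \sup_k q(k)(y)$. For the preimage identity, let $B \subset \RealN$ be an image: then $y \in q(a^{-1}(B))$ iff $\delta_y(q(a^{-1}(B))) = \mu_y(a^{-1}(B)) = (\mu_y \circ a^{-1})(B) = 1$, and since $\mu_y \circ a^{-1}$ is the restriction of $(\mu_y)_a = \delta_{q(a)(y)}$, this is equivalent to $q(a)(y) \in B$, i.e. to $y \in q(a)^{-1}(B)$; hence $q \circ a^{-1} = q(a)^{-1}$.

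The last two identities are then formal. For a quasi-measure $\mu$ on $Y$, the preimage identity gives $(q^*\mu)\circ a^{-1} = \mu \circ q \circ a^{-1} = \mu \circ q(a)^{-1}$, so $(q^*\mu)_a = \mu_{q(a)}$ by uniqueness of the Borel extension, and integrating the identity function yields $(q^*\mu)(a) = \mu_{q(a)}(\id) = \mu(q(a))$. For composable $q$ and $p$, anti-multiplicativity of the adjoint gives $(q \circ p)^* = p^* \circ q^*$, hence
\[
r_{q\circ p}(a)(y) = \big((q\circ p)^*\delta_y\big)(a) = \big(p^*(q^*\delta_y)\big)(a) = (q^*\delta_y)(r_p(a)) = r_q(r_p(a))(y),
\]
where the third equality is the change-of-variable formula just proved applied to the quasi-measure $q^*\delta_y$ on the codomain of $p$; since $y$ was arbitrary, $r_{q\circ p} = r_q \circ r_p$. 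I expect the continuity step to be the only genuine obstacle; the rest is bookkeeping with the spectrum proposition, the simple-quasi-measure proposition, uniqueness of the Borel extension, and anti-multiplicativity of the adjoint.
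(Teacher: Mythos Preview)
Your proof is correct and uses the same ingredients as the paper's: the identity $(\mu_y)_a = \delta_{q(a)(y)}$, the preimage chain $q(a)(y)\in A \Leftrightarrow y\in q(a^{-1}(A))$, and the change-of-variable formula. The organization differs in one notable way. You establish continuity first by a separate argument (reducing to $\{q(b)>0\}$ for $b\ge 0$ and unpacking via regularity of the quasi-integral), and only afterwards prove the general preimage identity $q\circ a^{-1}=q(a)^{-1}$. The paper reverses this: it proves the preimage identity \emph{first}, as a purely set-theoretic equivalence using $\sigma_a=\delta_{\sigma(a)}$, and then continuity of $q(a)$ drops out immediately from the case $A=U$ open, since $q(a)^{-1}(U)=q(a^{-1}(U))$ is open by the image-transformation axioms. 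Your continuity argument is, in effect, a laborious special case of this preimage identity restricted to half-lines; once you see that the preimage identity needs no continuity hypothesis, the detour becomes unnecessary.

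On the other hand, your verification of axiom~(ii) (the regularity $q(a)(y)=\sup_{k\le a}q(k)(y)$) is cleaner than the paper's direct argument: you simply invoke the already-established regularity of the quasi-integral $\mu_y$. The paper gives an independent proof via the regularity of $q$ and Urysohn, but explicitly notes afterward that your route is a valid alternative. The composition and change-of-variable steps are essentially identical in both proofs.
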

\begin{proof}
Recall that $\sigma_a = \delta_{\sigma (a)}$ holds for
a simple quasi-measure $\sigma$.
The equality $q \circ a^{-1} = q (a)^{-1}$  follows from
the equivalence of the following statements:
$q [a] (y) \in A$;
$\;\;1 =  \delta_{q [a] (y)} (A) = \delta_{[q^* \delta_y] (a)} (A) 
=  (q^* \delta_y)_a (A)  = q^* \delta_y \circ a^{-1} (A)$;
$\;\; y \in q (a^{-1} (A))$.
This proves continuity of $y \mapsto q (a) (y)$ from the case $A = U$.
The case $A = (-\infty, -\norm{a}) \cup (\norm{a}, \infty)$ gives
$q (a)^{-1} (A) = q \circ a^{-1} (A) = q (\emptyset) = \emptyset$,
so $\norm{q (a)} \le \norm{a}$.
So far we have proven that $q: C_b (X) \into C_b (Y)$ is well defined.
The property $q (\phi (a)) = \phi (q (a))$ follows from
$q (\phi (a)) (y) = 
(q^* \delta_y) (\phi (a)) = (q^* \delta_y)_a (\phi) = 
\delta_{(q^* \delta_y) (a)} (\phi) = \phi (q(a)(y))$,
which gives that $q: A(a) \into A(q(a))$ is a 
surjective algebra homomorphism.
We prove $q (a) (y) = \sup_{k \leq a} q(k) (y)$.
Quasi-linearity gives $\ge$.
Assume $q (a) (y) \in U$, 
or equivalently $y \in q (a^{-1} (U))$.
The claim follows if we can find $k \le a$ with 
$y \in q (k^{-1} (U))$.
The regularity of $q$ gives $K \subset a^{-1} (U)$ with
$y \in q (K)$.
Urysohn gives us $l$ with $K \le l \le X$.
With $k \dlik a l$, we conclude $k \le a$, and
$K \subset k^{-1} (k (K)) = k^{-1} (a (K)) \subset k^{-1} (U)$. 
This gives $y \in q (k^{-1} (U))$.
Let $\mu$ be a quasi-measure on $Y$.
Since $q (a)^{-1} = q \circ a^{-1}$, we
get $\mu_{q (a)} = (\mu \circ q)_a$,
and the change of variable formula
$\mu (q (a)) = (q^* \mu)(a)$ follows.
If $q,p$ are composable, then
$q (p (a)) (z) = $
$(q^* \delta_z)(p (a)) = $
$(\delta_z \circ q)_{p(a)} (id) = $
$(\delta_z \circ q \circ p)_{a} (id) = $
$[(q \circ p)^* \delta_z] (a)$.
\end{proof}

It should be observed that $a \mapsto q (a) (y)$ is
a simple quasi-integral for each $y$.
An alternative proof
of the regularity claim for $a \mapsto q (a)$
follows from
the corresponding statement for quasi-integrals. 
The special case $\mu = \delta_y$ in the change of variable
formula $\mu (q (a)) = (q^* \mu)(a)$ 
gives back the definition $q (a) (y) = (q^* \delta_y)(a)$.

\begin{theo}
Let $X$ be a locally compact Hausdorff space and
let $Y$ be a Hausdorff space.
There is 1-1 correspondence between
image-transformations $q: {\cal A} (X) \into {\cal A} (Y)$
and continuous functions $w : Y \into X^*$ given by
$q = w^{-1} \circ [*]$ and
$w = q^* \circ \iota_Y$,
with $\iota_Y (y) \dlik \delta_y$.
\end{theo}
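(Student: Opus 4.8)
The plan is to show that the two maps $q \mapsto w$ and $w \mapsto q$ described in the statement are mutually inverse bijections between image-transformations $q : {\cal A}(X) \into {\cal A}(Y)$ and continuous functions $w : Y \into X^*$. The two constructions are already available from the preceding results: given a continuous $w : Y \into X^*$, the map $[*] : {\cal A}(X) \into {\cal A}(X^*)$ is an image-transformation and $w^{-1} : {\cal A}(X^*) \into {\cal A}(Y)$ is an image-transformation (by the Propositions on $[*]$ and on $f^{-1}$), so $q \dlik w^{-1} \circ [*]$ is an image-transformation by composability; conversely, given an image-transformation $q$, the adjoint $q^*$ maps $Y^*$ into $X^*$ and $\iota_Y : Y \into Y^*$, $y \mapsto \delta_y$, is continuous, so $w \dlik q^* \circ \iota_Y : Y \into X^*$ is continuous (continuity of $w$ follows since for each $a$ the map $y \mapsto w(y)(a) = (q^*\delta_y)(a) = q(a)(y)$ is continuous, $q(a) \in C_b(Y)$ being the integral of $a$ with respect to $q$). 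So both maps are well defined, and it remains to check the two round trips.

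First I would verify $w^{-1} \circ [*] = q$ starting from an image-transformation $q$ with $w = q^* \circ \iota_Y$. Unwinding the definitions on a generic image $A \in {\cal A}(X)$: for $y \in Y$ we have $y \in (w^{-1} \circ [*])(A) = w^{-1}(A^*)$ iff $w(y) \in A^*$ iff $w(y)(A) = 1$ iff $(q^*\delta_y)(A) = 1$ iff $\delta_y(q(A)) = 1$ iff $y \in q(A)$. Here the key identity is $w(y)(A) = (q^*\delta_y)(A) = \delta_y(q(A))$, which is exactly the definition of the pulled-back quasi-measure $q^*\delta_y = \delta_y \circ q$ from the Proposition on $q^*$. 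This chain of equivalences gives $(w^{-1}\circ[*])(A) = q(A)$ for every image $A$, hence the two image-transformations agree.

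For the other round trip, start from a continuous $w : Y \into X^*$, set $q \dlik w^{-1} \circ [*]$, and check that $q^* \circ \iota_Y = w$. Fix $y \in Y$; I must show the quasi-measure $q^*\delta_y$ on $X$ equals the simple quasi-measure $w(y) \in X^*$. For an image $A$: $(q^*\delta_y)(A) = \delta_y(q(A)) = \delta_y(w^{-1}(A^*))$, which is $1$ iff $y \in w^{-1}(A^*)$ iff $w(y) \in A^* = \{\sigma \in X^* \st \sigma(A) = 1\}$ iff $w(y)(A) = 1$. Thus $(q^*\delta_y)(A) = w(y)(A)$ for all $A \in {\cal A}(X)$, so $q^*\delta_y = w(y)$, i.e. $(q^* \circ \iota_Y)(y) = w(y)$ as desired. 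This establishes that the two assignments are inverse to one another, proving the 1-1 correspondence.

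The bookkeeping above is essentially a sequence of definitional equivalences, so the genuine content of the theorem is all packed into the earlier propositions — that $[*]$ is an image-transformation (which needs local compactness of $X$), that $q^*$ sends $Y^*$ into $X^*$, and that $w = q^* \circ \iota_Y$ is actually continuous into $X^*$ with its weak topology. Consequently the main obstacle is not in this proof but in confirming that $q = w^{-1}\circ[*]$ is well-defined as a map on \emph{all} of ${\cal A}(X)$ — i.e. that $w^{-1}$ and $[*]$ genuinely land in images and that regularity is preserved through the composition; but that is precisely what the composability Proposition for image-transformations delivers, so in the end the argument reduces to the two verification chains displayed above. I would, however, take care to note explicitly that the correspondence respects the identification of $q$ with its associated quasi-homomorphism $r_q$, since the continuity of $w$ was argued through $q(a) = r_q(a) \in C_b(Y)$.
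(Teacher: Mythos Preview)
Your proof is correct and follows essentially the same approach as the paper: both establish continuity of $w = q^* \circ \iota_Y$ via $y \mapsto w(y)(a) = q(a)(y) \in C_b(Y)$, invoke the composability proposition to see that $w^{-1} \circ [*]$ is an image-transformation, and verify the two round trips by the same chain of definitional equivalences (the paper writes the chain only for open $U$, you for a general image $A$, but since image-transformations satisfy $q(A^c) = q(A)^c$ this is a cosmetic difference).
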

\begin{proof}
We prove that $w \dlik q^* \circ \iota_Y$ is continuous
when $q$ is an image-transformation.
Fix $a \in C_b (X)$.
It is sufficient to prove continuity of
$y \mapsto w (y) (a) = q^* \circ \iota_Y (y) (a) =$
$(q^* \delta_y) (a) = q (a) (y)$,
but this follows since $q (a) \in C_b (Y)$.
Equality $q = w^{-1} \circ [*]$ follows from 
the following equivalent statements:
\[
y \in q (U);\;\;
1 = \delta_y (q (U));\;\;
w (y) = (q^* \circ \iota_Y)(y) = q^* \delta_y \in U^*;\;\;
y \in w^{-1} ([*](U)).
\]
Let  $w : Y \into X^*$ be continuous.
Since composition of image-transformations produce
image-transformations,
it follows that $q \dlik w^{-1} \circ [*]$ is an image-transformation.
Equality  $w = q^* \circ \iota_Y$ follows by inspection
of:
\[
1 = (q^* \circ \iota_Y (y))(A) = \delta_y (q (A));\;\;
y \in q (A) = w^{-1} (A^*);\;\;
w (y) (A) = 1.
\]
\end{proof}

Continuity of $w = q^* \circ \iota_Y$ follows also
from continuity of $q^*$ and $\iota_Y$,
which follows easily by consideration of convergent nets
in $Y^*$ respectively $Y$.
The proof of the remaining statements in the commutative diagrams 
in the introduction is now rather straightforward,
and left to the reader.

If $q$ is an image-transformation and
$K \leq a \leq U$,
then  $K \subset a^{-1} \{ 1\} \subset a^{-1} (0,\infty) \subset U$,
so
$q (K) \subset q (a)^{-1} \{1\} \subset q (a)^{-1} (0,\infty) \subset q (U)$.
When $X$ is locally compact, Urysohn gives us
\begin{equation}
q (U)
 = \bigcup_{K \subset U} q (K)
 =  \bigcup_{k \preceq U} q (k)^{-1} \{1\}
 =  \bigcup_{k \preceq U} q (k)^{-1} (0,\infty).
\end{equation}
The above indicates a 1-1 correspondence between
image-transformations and quasi-homomorphisms.
  
\begin{theo}
Let $X$ be a locally compact normal space.
If $r: C_b(X) \into C_b(Y)$ is a quasi-homomorphism, 
then there exists a unique image-transformation $q$
such that $r (a) = q (a)$.
\end{theo}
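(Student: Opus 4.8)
The plan is to invert the integration assignment $q \mapsto r_q$ of the previous Proposition by passing through $X^*$ and the $1$-$1$ correspondence theorem just established. The starting point is that, for each fixed $y \in Y$, the functional $a \mapsto r(a)(y)$ is a \emph{simple} quasi-integral on $C_b(X)$. Indeed, it is linear and multiplicative on each $A(a)$ because $r : A(a) \into A(r(a))$ is an algebra homomorphism, equivalently because $r(\phi(a)) = \phi(r(a))$ for every continuous $\phi$; taking $\phi \equiv 1$ gives the normalization $r(1_X)(y) = 1$, and taking $\phi(t) = \max(t,0)$ gives positivity (if $a \ge 0$ then $a = \phi(a)$, so $r(a)(y) = \phi(r(a)(y)) = \max(r(a)(y),0) \ge 0$); and axiom (ii) of a quasi-homomorphism is exactly the compact-regularity $r(a)(y) = \sup_{k \le a} r(k)(y)$, where local compactness of $X$ supplies enough functions of compact support.

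Since $X$ is locally compact normal, the Riesz Representation Theorem of Section~2 applies and yields, for each $y$, a unique \emph{simple} quasi-measure $\mu_y \in X^*$ with $\mu_y(U) = \sup_{k \preceq U} r(k)(y)$ and with integral $\mu_y(a) = r(a)(y)$. Set $w(y) \dlik \mu_y$, giving a map $w : Y \into X^*$. This map is continuous: for each $a \in C_b(X)$ the composition $y \mapsto w(y)(a) = r(a)(y)$ is continuous because $r(a) \in C_b(Y)$, and $X^*$ carries the weak topology generated by the evaluations $\sigma \mapsto \sigma(a)$.

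Now the $1$-$1$ correspondence theorem (applicable since $X$ is locally compact Hausdorff and $Y$ is Hausdorff) gives that $q \dlik w^{-1} \circ [*]$ is an image-transformation from $X$ to $Y$ and that $w = q^* \circ \iota_Y$; in particular $q^* \delta_y = w(y) = \mu_y$ for every $y$. Therefore
\[
q(a)(y) = (q^* \delta_y)(a) = \mu_y(a) = r(a)(y),
\]
so $q(a) = r(a)$ for all $a \in C_b(X)$, which settles existence. For uniqueness, if $q'$ is any image-transformation with $q'(a) = r(a)$ for all $a$, then the displayed identity $q(U) = \bigcup_{k \preceq U} q(k)^{-1}(0,\infty)$ (valid since $X$ is locally compact) shows that $q$ and $q'$ are determined on open sets by the common function $a \mapsto r(a)$, hence $q(U) = q'(U)$ for all open $U$; applying $q(A^c) = q(A)^c$ then gives $q = q'$ on all of ${\cal A}(X)$. (Equivalently, $q'(a) = q(a)$ forces $q'^{*}\delta_y = q^*\delta_y$ by the injectivity built into the Riesz correspondence, whence $w' = w$ and $q' = q$.)

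The argument is mostly a matter of chaining results already proved; I expect the only delicate point to be the first step — verifying that $a \mapsto r(a)(y)$ satisfies \emph{all} the quasi-integral axioms, in particular positivity, normalization and compact-regularity — since it is precisely this that lets the Riesz theorem produce a bona fide point of $X^*$ rather than merely a quasi-linear functional. No substantive obstacle beyond this bookkeeping is anticipated.
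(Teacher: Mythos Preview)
Your argument is correct and takes a genuinely different route from the paper. The paper constructs $q$ directly by the formula $q(U) \dlik \bigcup_{k \preceq U} r(k)^{-1}(0,\infty)$ and then verifies the image-transformation axioms one by one: additivity on open sets, extension to closed sets by complementation, additivity and monotonicity on all of $\cal A$ (using normality via Urysohn for the mixed cases), and finally regularity; only at the very end does it invoke the Riesz correspondence on the fibre functional $\sigma(a) = r(a)(y)$ to conclude $\sigma = q^*\delta_y$ and hence $r(a) = q(a)$. You instead apply Riesz \emph{first} to each fibre functional, obtaining $w: Y \into X^*$, and then let the already-proved factorization theorem manufacture $q = w^{-1}\circ[*]$ as an image-transformation for free. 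Your approach is conceptually cleaner and makes better use of the machinery already in place, bypassing the axiom-by-axiom verification entirely; the paper's approach is more self-contained and makes the explicit set-theoretic formula for $q(U)$ the definition rather than a derived consequence, which may be preferable if one wants to see directly where normality enters.
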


\begin{proof}
Put $q (U) \dlik \bigcup_{k \preceq U} r (k)^{-1} (0,\infty)$.
From $a_l \dlik a l \uparrow a $ with $l \preceq U$
and the monotone convergence theorem,
we conclude  $q (U) = \bigcup_{a \le U} r (a)^{-1} (0,\infty)$.\\
We prove additivity $q (U \uplus V) = q (U) \uplus q (V)$
on open sets:
$a \le U$ and $b \le V$ give
$a b = 0$, $a, b \in A (a - b)$, 
$0 = r (a b) = r (a) q (b)$,
$r (a)^{-1} (0,\infty) \cap r (b)^{-1} (0,\infty) = \emptyset$,
and finally $q (U) \cap q (V) = \emptyset$.
$\supset$ follows from monotonity on open sets.
We prove $\subset$: Let $y \in q (U \uplus V)$.
Then there exists $c \le U \cup V$ with $r (c) (y) > 0$.
It follows that $c = a + b$ with $a \le U$ and $b \le V$.
From $a,b \in A (a - b)$ it follows that
$r(a) (y) + r(b) (y) = r (c) (y) > 0$,
$r (a) (y) > 0$ or $r (b) (y) > 0$,
and finally $y \in q (U) \cup q (V)$.\\
$q (X) = Y$: Let $y \in Y$. 
The regularity gives 
$1 = r[1_X] (y) = \sup_{a \le X} r[a] (y)$ so
there exists  $a \le X$ such that $r [a] (y) > 0$.
Since $q$ is additive on open sets
we can extend $q$ to $\cal A$ by $q (F) = q (F^c)^c$.
It follows that $q (F) = \bigcap_{a \ge F} r (a)^{-1} \{1\}$.
This, and normality, proves additivity on closed sets.\\
It is clear that $q$ is monotone on open sets and
on closed sets.
Monotonity of $r$ gives that $U \subset F$
implies $q (U) \subset q (F)$.
Normality and Urysohn gives $F \le a \le U$
from which $q (F) \subset q (U)$ follows.
The set function $q$ is therefore additive and monotone.
Regularity follows from consideration of
$k \le \supp k \le V \le K = \overline{V} \le U$,
so $q$ is an image-transformation.\\
We prove $r (a) (y) = q (a) (y)$:
The map $a \mapsto r (a) (y) = \sigma (a)$ defines a
simple quasi-integral $\sigma$ with a corresponding
simple quasi-measure also denoted by $\sigma$.
Equality $\sigma = q^* \delta_y$ follows from equivalence of the
following statements:
\[
1 = \sigma (U) = \sup_{k \preceq U} \sigma (k);\;\;
\exists k \preceq U\; r(k)(y) > 0;\;\;
y \in q (U);\;\;
1 = \delta_y (q (U)).
\]
\end{proof}

\section{Examples.}

{\bf 4.1. The Aarnes Measure on the Square.}\\
The following example is abstracted from Aarnes \cite{AARNES:QUASI}.
A set $A$ in the unit square $X = [0,1]^2$ is solid if
$A$ and its complement $A^c$ are both connected.
Let $\partial X$ denote the border of $X$.
Put $\mu (A) = 1$ if $A$ contains the border or if
$A$ intersects both the border and $\{(1/2, 1/2)\}$,
and put $\mu (A) = 0$ otherwise.
This defines a 0-1 valued set function $\mu$ on the class of
solid sets in $X$.
If $F$ is closed and connected in $X$, 
then $F^c$ is a countable disjoint
union of open solid sets $U_1, U_2, \ldots$,
and we extend $\mu$ by $\mu (F) \dlik 1 - \mu (U_1) - \mu (U_2) - \cdots $. 
The set function is extended to the class ${\cal C}_0$ of
disjoint finite unions of
connected closed sets by 
$\mu (F_1 \uplus \cdots \uplus F_n) = \mu (F_1) + \cdots + \mu (F_n)$.
If $U$ is open, $\mu (U)$ is defined to be the supremum of
$\mu (K)$ for $K \subset U$, $K$ in  ${\cal C}_0$.
Finally $\mu (U^c) \dlik 1 - \mu (U)$ extends $\mu$ to
the class $\cal A$ of sets which are open or closed.
The set function $\mu$ is a proper quasi-measure since
it is not subadditive.
We refer to this quasi-measure as the Aarnes measure on the square.

\noindent{\bf 4.2. A Non-Linear Integral.}\\
Let $a$ be the ``pyramidal'' \cite[p.65]{AARNES:QUASI} 
function on $X$ whose graph
is given by the four planes which contain the point
$(1/2,1/2,1)$ in $\RealN^3$ and respectively the four sides
of $\partial X$.
In particular $a^{-1} (\{0\}) = \partial X$ which has
Aarnes measure $\mu (\partial X) = 1$.
This gives $\mu_a = \delta_0$, and $\mu (a) = 0$.\\
Let the graph of $b \ge 0$ be given by the plane
containing $F = \{(t,s,0) \st 0 \le t \le 1,\; 0 \le s \le 1/2 \}$,
and the plane containing the line segments 
$\{(t,1/2,0) \st 0 \le t \le 1 \}$
and
$\{(t,1,1) \st 0 \le t \le 1 \}$.
Since $b = 0$ on $F$, and $\mu (F) = 1$,
it follows that $\mu (b) = 1$. 
Let $G$ be the closed triangle in $X$ with 
corners at $(0,1),(1/2,1/2),(1,1)$.
The function $a + b$ equals $1$ on $G$, 
and $\mu (G) = 1$, 
so the quasi-integral $\mu$ is nonlinear 
\begin{equation}
1 = \mu (a + b) \neq \mu (a) + \mu (b) = 0 + 0 = 0 .
\end{equation}

\noindent{\bf 4.3. The 3-point Quasi-Measure and Quasi-Measures in the Plane.}\\
Consider again the unit square $X$.
A (normalized!) quasi-measure $\mu$ is parliamentary 
\cite{KNUDSEN:TOP} if $\mu (A) \neq 1/2$ for all $A$. 
An example is given by the ordinary
3 point measure $\mu_{p,q,r} = (\delta_p + \delta_q + \delta_r)/3$.
A simple quasi-measure $\tilde{\mu}$ is defined on
solid sets $K$ by
$\tilde{\mu} (K) = 1_{(1/2,1]} (\mu (K))$
and extended to $\cal A$ as in 4.1.
The 3-point quasi-measure is $\tilde{\mu}_{p,q,r}$.
It follows easily that the  3-point quasi-measure
is not a measure.
Let $\tilde{\mu}_{p,q,r}^n$ be the 3-point quasi-measure
on $X_n \dlik [-n,n]^2$, 
let $\iota_n$ be the natural injection of $X_n$ into the plane,
and conclude that 
$\mu_n \dlik \tilde{\mu}_{p,q,r}^n \circ {\iota_n}^{-1}$
is a quasi-measure on the plane.
Define 
$\mu_\infty (A) \dlik \lim \mu_n (A)$ for all images
$A$ in the plane.
The set function $\mu_\infty$ is additive,
but not compact-regular.
This example shows that
non-regular quasi-measures
arise quite naturally.
There is a Riesz representation theorem also for 
such measures  \cite{BOARDMAN:BAIRE}.  
Consider next 
a continuous $f_n: X \into \RealN^2$ with
$f_n (X) = [-1/4, 1/4]^2 + \{n, 0 \}$,
and put 
$\nu (A) \dlik \sum_{n \ge 1} 
\tilde{\mu}_{p,q,r} \circ {f_n}^{-1} (A) / 2^n $.
It follows that $\nu$ is a quasi-measure in
the plane which takes all values in $[0,1]$.
This example can be
generalized to the statement that
$A \mapsto \sum_n p_n \nu_n (A)$ is a quasi-measure
if $\sum_n p_n = 1$, $p_n \ge 0$, and each
$\nu_n$ is a quasi-measure.

\noindent{\bf 4.4. Image-Transformations.}\\
Let $\sigma$ be a simple quasi-measure on $X$.
An image-transformation $q$ \cite[p.10-11]{AARNES:IMAGE} from $X$ to $Y$
is defined by $q (A) = Y$ if $\sigma (A) = 1$ and
$q (A) = \emptyset$ if $\sigma (A) = 0$.
Let $Y$ be a subset of $X^*$,
let $w: Y \into X^*$ be the inclusion map,
and conclude that $q = w^{-1} \circ [*]$ is an
image-transformation.
Observe in particular that we may choose a finite $Y$.
More examples may be constructed and investigated by
continuous parametrizations $w$
of simple quasi-measures in $X^*$.

\section{Comments on Previous Results.}

In \cite{AARNES:QUASI} Aarnes establishes a Riesz representation
theorem for quasi-states in terms of quasi-measures on
compact Hausdorff spaces.
This has been generalized to
the case of a locally compact Hausdorff space $X$ in two different directions.
Aarnes \cite{AARNES:LOCALLY} arrives at quasi-measures which
are compact-regular, but not additive, by consideration
of the one point compactification of $X$.
Boardman \cite{BOARDMAN:BAIRE} obtains a representation theorem
for quasi-linear integrals on $C_b (X)$ in terms of 
quasi-measures which are additive, but not
compact-regular.
We introduced quasi-measures which are additive
and compact-regular.
Integration with respect to a quasi-measure $\mu$ is
defined as in \cite[p.46]{AARNES:QUASI}:
If $a: X \into \RealN$ is continuous, then
$\mu \circ a^{-1}$ is the restriction of a measure
$\mu_a$ on $\RealN$,
and $\mu (a) \dlik \int t \mu_a (dt)$ is the integral.
A novelty in our work is the simplified 
proof of the quasi-linearity of $a \mapsto \mu (a)$,
which avoids the consideration of the 
Riemann-Stieltjes integral \cite[p.46-52]{AARNES:QUASI}.
We remark also that this method of integration fails in
Boardman's more general case.
Our proof of the correspondence between image-transformations
and quasi-homomorphisms is different from the proof
by Aarnes.
Aarnes  \cite[p.13]{AARNES:QUASI} used the
fact that all homorphisms are on the form 
$a \mapsto a \circ w$ in the case of compact Hausdorff spaces,
but this is not available for locally compact spaces.
Differences like this are generically found on comparison
with \cite{AARNES:QUASI} which deals only with the compact case.

\newpage

{%\small
\bibliography{image}
\bibliographystyle{plain}
}

\end{document}